\documentclass[11pt]{article}
\usepackage{amssymb,amsfonts,amsmath,latexsym,epsf,tikz,url}

\newtheorem{theorem}{Theorem}[section]

\newtheorem{remark}[theorem]{Remark}
\newtheorem{definition}[theorem]{Definition}

\newcommand{\proof}{\noindent{\bf Proof. }}
\newcommand{\qed}{\hfill $\square$\medskip}

\textwidth 14.5cm
\textheight 21.0cm
\oddsidemargin 0.4cm
\evensidemargin 0.4cm
\voffset -1cm

\begin{document}

\title{Introduction to dominated edge chromatic number of  a graph}

\author{Mohammad R. Piri \and
Saeid Alikhani$^{}$\footnote{Corresponding author}}

\date{\today}

\maketitle

\begin{center}
Department of Mathematics, Yazd University, 89195-741, Yazd, Iran\\
{\tt piri4299@gmail.com, alikhani@yazd.ac.ir }
\end{center}


\begin{abstract}
We introduce and study  the dominated edge coloring of a graph. A dominated edge coloring of a graph $G$ is a proper edge coloring of $G$ such that each color class is dominated by at least one edge of $G$. The minimum number of colors among all dominated edge coloring is called the dominated edge chromatic number, denoted by $\chi_{dom}^{\prime}(G)$.  We obtain some properties of $\chi_{dom}^{\prime}(G)$ and compute it  for specific graphs. Also we examine the effects on $\chi_{dom}^{\prime}(G)$ when $G$ is modified by operations on vertex and edge of $G$. Finally, we consider the $k$-subdivision of $G$ and study the  dominated edge chromatic number of these  kind of graphs.
\end{abstract}

\noindent{\bf Keywords:} dominated edge chromatic number; subdivision; operation; corona. 

\medskip
\noindent{\bf AMS Subj.\ Class.}: 05C25 

\section{Introduction and definitions}
Let $G=(V, E)$ be a simple graph and $\lambda \in \mathbb{N}$.  If $f$ is a proper coloring of $G$ with the coloring classes $V_1, V_2, ... ,V_{\lambda}$ such that every vertex in $V_i$ has color $i$, then  sometimes proper coloring  write simply $f= (V_1, V_2, ... ,V_{\lambda})$. The chromatic number $\chi (G)$ of $G$ is the minimum of colors needed in a proper coloring of a graph. Similarly, if $f$ is a proper edge coloring of $G$ with the coloring classes $E_1, E_2, ... ,E_{\lambda}$ such that every edge in $E_i$ has color $i$,  write simply $f= (E_1, E_2, ... ,E_{\lambda})$. The edge chromatic number $\chi^{\prime} (G)$ of $G$ is the minimum of colors needed in a proper edge coloring of $G$. 

A dominator coloring of $G$ is a proper coloring of $G$ such that  every vertex of $G$ is adjacent to all vertices of at least one color class. The dominator chromatic number $\chi_{d}(G)$ of $G$ is the minimum number of color classes in a dominator coloring of $G$. The concept of dominator coloring was introduced and studied by Gera, Horton and Rasmussen \cite{e}.
 For a graph $G$ with no isolated vertex, the total dominator coloring is a proper coloring of $G$ in which each vertex of the graph is adjacent to every vertex of some (other) color class. The total dominator chromatic number, abbreviated TD-chromatic number, $\chi_{d}^{t}(G)$ of $G$ is the minimum number of color classes in a TD-coloring of $G$. For more information see \cite{nima1,nima2}. \\
 A set $T$ of vertices is a total dominating set of $G$ if every vertex of $G$ is adjacent to at least one vertex in $T$. The total domination number $\gamma_t(G)$ of $G$ is the minimum number of vertices in a total dominating set of $G$. A set $F$ of edges is an edge dominating set of $G$ if every edge not in $F$ is adjacent to at least one edge in $F$. A set $D$ of edges is a total edge dominating set of $G$ if every edge in $G$ is adjacent to at least one edge in $D$. The edge domination number $\gamma^{\prime}(G)$ and the total edge domination number $\gamma_t^{\prime}(G)$ are the minimum number of edges in an edge dominating set and in a total edge dominating set of $G$, respectively \cite{ccd}.
 
Dominated coloring of a graph is a proper coloring in which each color class is  dominated by a vertex. The least number of colors needed for a dominated coloring of $G$ is called the dominated chromatic number of $G$ and denoted by $\chi_{dom}(G)$ (\cite{Piri,Choopani,cc}).

\medskip 
Motivated by dominated chromatic number of graphs, we consider the proper edge coloring of $G$ and introduce the dominated edge chromatic number of $G$, $\chi_{dom}^{\prime}(G)$, obtain some properties of $\chi_{dom}^{\prime}(G)$ and compute this parameter for specific graphs, in the next section. In Section $3$, we examine the effects on $\chi_{dom}^{\prime}(G)$ when $G$ is modified by operations on vertex and edge of $G$. Finally in Section $4$, we study the dominated edge chromatic number of $k$-subdivision of graphs.

\medskip

\section{Introduction to dominated edge chromatic number}

First we need to introduce some additional but standard notation and definitions.  The maximum degree of a graph
$G$ is denoted by $\Delta(G)$. The open and closed neighborhood of a vertex $x \in V$ are
denoted by $N(x)$ and $N[x]$, respectively. The open neighborhood of an edge $e\in E$ is  $N(e)=\{e'\in E:e' ~\textsl{\rm{ is adjacent to}} ~ e\}$. We denote by $P_n$ the path on $n$ vertices and
by $C_n$ the cycle on $n$ vertices.  The complete graph on $n$ vertices is denoted by $K_n$. The complete bipartite graph with
parts of orders $r$ and $s$ is denoted by $K_{r,s}$ and the star is the complete bipartite graph
$K_{1,k}$  with $k \geq 1$. A bi-star $B_{p,q}$ is a graph formed by two stars $S_p$
and $S_q$ by adding an edge between their center vertices. The join of two graphs $G$ and $H$, denoted by $G+H$, is a graph with vertex set $V(G)\cup V(H)$ and edge set $E(G)\cup E(H)\cup \{uv|u\in V(G), v\in V(H)\}$.

In this section, we state the definition of dominated edge chromatic number and obtain this parameter for some specific graphs.
\begin{definition}
 A dominated edge coloring of graph $G$ is a proper edge coloring of $G$ in which every color class is dominated by at least one edge of $G$. More precisely, a $k$-dominated edge coloring of $G$ is a proper $k$-coloring $\{ C_1, C_2, ..., C_k\}$ of $G$ for every  $i\in \{ 1, 2, ..., k\}$, there exists an edge  $e\in E$ such that $C_i \subseteq N(e
)$. The dominated edge chromatic  number of $G$, $\chi_{dom}^{\prime}(G)$, is the minimum number of color among all dominated edge coloring of $G$.
\end{definition}

Observe that $\chi_{dom}^{\prime}(G)\geq 1$, and $\chi_{dom}^{\prime}(G)=1$ if and only if $G$ is $K_2$.  Also 
$\chi_{dom}^{\prime}(G)\geqslant \gamma_t^{\prime}(G)$, where $\gamma_t^{\prime}(G)$ is total edge domination number of $G$. To see the reason, consider a minimum dominated edge coloring of $G$. 
For constructing a total edge dominating set of $G$, $D_t^{\prime}$,   from each color class we  take one of its dominating edges. The set $D_t^{\prime}$ is an edge dominating set 
with  cardinality $\chi_{dom}^{\prime}(G)$. Moreover, $D_t^{\prime}$ is a total edge dominating set, since each of its edges has a color and is therefore dominated by some other edge of $D_t^{\prime}$.
 
 \begin{remark}\label{zz}
 For every graph $G$, $\chi_{dom}^{\prime}(G)\geqslant \Delta (G)$ and this inequality is sharp. As an example, for the star graph $K_{1,n}$, $\chi_{dom}^{\prime}(K_{1,n})=n$.
 \end{remark} 
 \begin{remark}\label{aa}
 In a dominated edge coloring, every color can be used at most twice. So if $\{C_1, ... , C_t\}$ is color classes of dominated edge coloring, then for every $1\leqslant i \leqslant t$, $\vert C_i\vert =1$ or $\vert C_i\vert=2$. Therefore for every graph $G$ of size  $m$, $\chi_{dom}^{\prime}(G)\geq \lceil \frac{m}{2}\rceil$.
 \end{remark}

 \begin{figure}
 	\begin{center}
 		\includegraphics[width=0.3\textwidth]{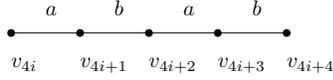}
 		\caption{\label{fv} Dominated edge coloring for $P_5$.}
 	\end{center}
 \end{figure}
 
 \begin{figure}
 	\begin{center}
 		\includegraphics[width=0.9\textwidth]{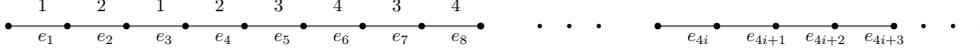}
 		\caption{\label{fvv} Dominated edge coloring for $P_n$.}
 	\end{center}
 \end{figure}
  
 By Remark \ref{aa}, and Figures \ref{fv}, \ref{fvv} and \ref{fw2} we have the following theorem which is about the dominated edge chromatic number of path $P_n$, cycle $C_n$, complete graph $K_n$, complete bipartite graph $K_{m,n}$, wheel graph $W_n$ and friendship graph $F_n:=K_1+nK_2$: 
 \begin{theorem}
 	\begin{enumerate} 
\item[(i)]  \label{path}  For every natural number $n\geq 5$,   
 \[
 	\chi_{dom}^{\prime}(P_n)=\chi_{dom}^{\prime}(C_{n-1})=\left\{
  	\begin{array}{ll}
  	{\displaystyle
  		\dfrac{n-1}{2}}&
  	\quad\mbox{if $n\equiv 0 ~(mod\ 4)$, }\\[15pt]
  	{\displaystyle
  		\lfloor \dfrac{n-1}{2}\rfloor +1} &
  	\quad\mbox{otherwise. }
  	\end{array}
  	\right.	
  	\]
  		\item [(ii)]
  		$\chi_{dom}^{\prime}(K_n)=\lceil \dfrac{n(n-1)}{4}\rceil$.
  		\item[(iii)]  For every $m,n\geq 2$,   $\chi_{dom}^{\prime}(K_{m,n})=\lceil\frac{mn}{2}\rceil$.
  		\item[(iv)] For any $n\geq 3$, $\chi_{dom}^{\prime}(W_n)=n-1$.
  		\item[(v)] For $n\geq 2$,  $\chi_{dom}^{\prime}(F_n)=2n$
  	\end{enumerate} 
 \end{theorem}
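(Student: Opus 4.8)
\medskip\noindent\textbf{Proof plan.} I would prove the five parts separately, but along one template: for each graph $G$ combine the two lower bounds already available --- $\chi_{dom}^{\prime}(G)\ge\Delta(G)$ from Remark \ref{zz} and $\chi_{dom}^{\prime}(G)\ge\lceil m/2\rceil$ (with $m=|E(G)|$) from Remark \ref{aa} --- with an explicit dominated edge colouring attaining it. The key tool on the upper-bound side is the observation that in $K_n$ ($n\ge4$) and $K_{m,n}$ ($m,n\ge2$) any two disjoint edges $xy,zw$ have a common neighbouring edge (namely $xz$, resp.\ the edge joining an end of one to an end of the other), while a singleton class is trivially dominated; since the size-$2$ classes of any proper edge colouring are automatically pairs of disjoint edges, every proper edge colouring of these two graphs in which all classes have size $\le 2$ is already a dominated edge colouring. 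Hence for $K_n$ and $K_{m,n}$, $\chi_{dom}^{\prime}(G)$ is simply the least number of colours in such a colouring, i.e.\ $m$ minus the size of a maximum ``pairing'' of $E(G)$ into disjoint pairs.

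That pairing problem is a maximum matching in the disjointness graph of $E(G)$ --- the Kneser graph $K(n,2)$ for $K_n$ and the tensor product $K_m\times K_n$ for $K_{m,n}$. I would note that both graphs are vertex-transitive and are connected except in the degenerate cases $n=4$ and $(m,n)=(2,2)$, and then invoke the fact that a connected vertex-transitive graph has a perfect matching if its order is even and is factor-critical (hence has a matching missing exactly one vertex) if its order is odd. This yields a partition of $E(G)$ into $\lceil m/2\rceil$ classes of size $\le2$, so $\chi_{dom}^{\prime}(K_n)\le\lceil\binom{n}{2}/2\rceil=\lceil n(n-1)/4\rceil$ and $\chi_{dom}^{\prime}(K_{m,n})\le\lceil mn/2\rceil$, matching Remark \ref{aa}; the tiny cases $K_3,K_4,K_{2,2}$ are checked by hand. (Alternatively the pairing can be written out explicitly starting from a $1$-factorisation, resp.\ near-$1$-factorisation, of $G$ and recombining the leftover edges of the odd-size factors --- this recombination, needed only when $\binom{n}{2}$ resp.\ $mn$ is odd, is the one place that requires care.)

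For (iv), taking $W_n=K_1+C_{n-1}$, the hub has degree $n-1$ and $W_n$ has $2(n-1)$ edges, so $\chi_{dom}^{\prime}(W_n)\ge n-1$. For the upper bound I would colour the $n-1$ spokes with all $n-1$ colours (forced), and then, via Hall's theorem, choose a bijection sending each rim edge $v_iv_{i+1}$ to a colour different from the spoke-colours at $v_i$ and $v_{i+1}$ (each rim edge has $n-3$ admissible colours, and the tightest sets under Hall's condition, pairs of consecutive rim edges, still see $n-2\ge2$ colours); the resulting colouring is proper and each colour class is one spoke $v_0v_i$ together with one rim edge $v_jv_{j+1}$, which is dominated by the spoke $v_0v_j$, so $\chi_{dom}^{\prime}(W_n)=n-1$. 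For (v), $F_n$ has a hub of degree $2n$ and $3n$ edges, so $\chi_{dom}^{\prime}(F_n)\ge2n$; I would colour the $2n$ hub edges with distinct colours and each triangle edge $u_iw_i$ with a colour from the same palette avoiding the two hub-colours at $u_i,w_i$, choosing these $n$ colours to be pairwise distinct (an easy system of distinct representatives, as each colour is forbidden for at most one triangle edge). Then each class is either a singleton hub edge or a pair $v_0u_k,\,u_jw_j$ dominated by $v_0u_j$ --- it is essential here that no class contains two triangle edges, since two triangle edges of $F_n$ have no common neighbouring edge --- so $\chi_{dom}^{\prime}(F_n)=2n$ for $n\ge2$.

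Finally (i): both $P_n$ and $C_{n-1}$ have $m=n-1$ edges, so $\chi_{dom}^{\prime}\ge\lceil(n-1)/2\rceil$ by Remark \ref{aa}, which already equals the claimed value except in the residue class $\bmod\ 4$ where the claim is one larger. In that case a colouring with only $\lceil(n-1)/2\rceil$ colours would have to split all $m$ edges into disjoint pairs; but in a path or cycle two non-adjacent edges have a common neighbouring edge exactly when they are at distance $2$ in the line graph, so every such pair is of the form $\{e_i,e_{i+2}\}$, and the ``distance-$2$'' graph on the edges --- two disjoint paths for $P_n$, two disjoint cycles for $C_{n-1}$, all of whose components have odd order in this residue class --- has no perfect matching, forcing one more colour. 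The matching upper bound is precisely the colouring of Figures \ref{fv}, \ref{fvv} and its cyclic analogue: pair $e_i$ with $e_{i+2}$ along the odd- and the even-indexed edges, leaving at most one leftover singleton in each chain; each pair $\{e_i,e_{i+2}\}$ is dominated by $e_{i+1}$, the colouring is proper, and the colour count reproduces the stated formula. The hardest steps, I expect, are the pairing bookkeeping (or the clean appeal to near-perfect matchings in vertex-transitive graphs) together with the small-case checks in (ii)--(iii), and in (i) the parity argument that an extra colour is genuinely needed in the exceptional residue class; the rest is routine verification that the displayed colourings are proper and dominated.
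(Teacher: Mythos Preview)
Your argument is correct and considerably more detailed than what the paper offers. The paper's ``proof'' is a single sentence pointing to Remark~\ref{aa} and to Figures~\ref{fv}--\ref{fw2}, which display explicit colourings; no justification is given for the lower bounds beyond $\lceil m/2\rceil$ (so the extra colour needed in the exceptional case of~(i), and the $\Delta$-bound that actually drives (iv)--(v), are not argued), and the upper bounds are left to the reader to read off the pictures.

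Your route differs in substance. You supply the missing lower-bound argument in~(i) via the perfect-matching obstruction in the distance-$2$ graph on the edges, which is exactly what is needed and is absent from the paper. For (ii)--(iii) you replace an ad~hoc explicit colouring by a structural observation---any proper edge colouring of $K_n$ or $K_{m,n}$ with classes of size at most two is automatically dominated---and then reduce the question to a near-perfect matching in a vertex-transitive graph. For (iv)--(v) you organise the construction through Hall's theorem / systems of distinct representatives rather than an explicit picture. All of this is sound; the only trade-off is that invoking the near-perfect-matching theorem for connected vertex-transitive graphs is heavier than simply writing down a pairing (as you note, one can alternatively extract the pairing from a $1$-factorisation), whereas the paper's pictorial approach is shorter but leaves the genuine content---why fewer colours are impossible---unproved.
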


 \begin{figure}
 	\begin{center}
 		\includegraphics[width=0.7\textwidth]{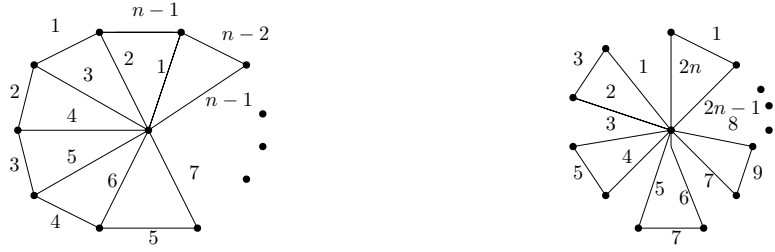}
 		\caption{\label{fw2} Dominated edge coloring of wheel $W_n$ and friendship graph $F_n$, respectively. }
 	\end{center}
 \end{figure}

\begin{theorem}\label{rem}
If $G$ is a connected graph containing $P_7$ as an induced subgraph, then $\chi_{dom}^{\prime}(G)\geq \Delta (G)+2$. More generally, if the graph $P_n$ is an induced subgraph of $G$, then $\chi_{dom}^{\prime}(G)\geq \Delta(G) + \chi_{dom}^{\prime}(P_{n-4})$.
\end{theorem}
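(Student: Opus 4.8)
The plan is to fix a dominated edge colouring $\phi$ of $G$ using $\chi_{dom}^{\prime}(G)$ colours, choose a vertex $u$ with $\deg(u)=\Delta(G)$, and separate the colours into those that occur on the star at $u$ and those that occur on the ``core'' of the induced path. Write the induced copy of $P_n$ as $v_1v_2\cdots v_n$ with $e_i=v_iv_{i+1}$, and let $Q=v_3v_4\cdots v_{n-2}$, a copy of $P_{n-4}$ with edge set $\{e_3,\dots,e_{n-3}\}$. Put $A=\{\phi(uw):uw\in E(G)\}$ and $B=\{\phi(e):e\in E(Q)\}$. The $\Delta(G)$ edges at $u$ are pairwise adjacent, so $|A|=\Delta(G)$. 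The goal is to show $|B|\ge\chi_{dom}^{\prime}(P_{n-4})$ and $A\cap B=\varnothing$; then $\chi_{dom}^{\prime}(G)=|A\cup B|\ge\Delta(G)+\chi_{dom}^{\prime}(P_{n-4})$, and the first assertion is the special case $n=7$ since $\chi_{dom}^{\prime}(P_3)=2$.

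The first step is to check that $\phi$ restricted to $E(Q)$ is a dominated edge colouring of $Q\cong P_{n-4}$. Properness is inherited. For domination, use Remark \ref{aa}: every colour class has size $\le 2$. Suppose a core edge $e_a$ ($3\le a\le n-3$) shares its colour with another path edge $e_b$. A dominating edge $f$ of the class $\{e_a,e_b\}$ must be adjacent to $e_a$, hence incident with $v_a$ or $v_{a+1}$, and also adjacent to $e_b=v_bv_{b+1}$; since $P_n$ is induced we have $v_iv_j\notin E(G)$ for $|i-j|\ge 2$, and chasing this forces $b\in\{a-2,a+2\}$ with $f=e_{a-1}$ or $f=e_{a+1}$ respectively. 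Both of these lie in $E(Q)$ whenever $n\ge 7$, and a singleton class $\{e_a\}$ is likewise dominated by a neighbour among $e_3,\dots,e_{n-3}$. Hence the colours of $B$ realise a dominated edge colouring of $P_{n-4}$, so $|B|\ge\chi_{dom}^{\prime}(P_{n-4})$.

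The crux, and the step I expect to be the main obstacle, is $A\cap B=\varnothing$. If a core edge $e_a$ carried a colour also appearing on an edge $uw$ at $u$, then $\{e_a,uw\}$ would be a colour class dominated by some edge $f$ that is incident with $u$ or $w$ and also incident with $v_a$ or $v_{a+1}$. The argument I would run is that the induced structure of $P_n$ leaves no admissible $f$ once $u$ is bounded away from the endpoints of the path, so the colour of a core edge is ``private'' to the core; concretely for $n=7$ this isolates the two central edges $e_3,e_4$ of the induced $P_7$, whose two distinct colours are then forced to avoid $A$. The delicate situation is when $u$ lies on $Q$ or within distance two of it, where $uv_a$ or $uv_{a+1}$ could play the role of $f$; there one must either exploit the extra room in the longer induced path to slide the subpath $Q$ away from $u$, or replace the disjointness step by a direct count of how many edges the colours of $A$ can simultaneously cover.

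Assembling the pieces, $\chi_{dom}^{\prime}(G)$ is the total number of colours of $\phi$, which is at least $|A\cup B|=|A|+|B|-|A\cap B|=\Delta(G)+\chi_{dom}^{\prime}(P_{n-4})$. Taking $n=7$ and using $\chi_{dom}^{\prime}(P_{n-4})=\chi_{dom}^{\prime}(P_3)=2$ yields $\chi_{dom}^{\prime}(G)\ge\Delta(G)+2$ for every connected graph $G$ containing $P_7$ as an induced subgraph, which is exactly the stated bound.
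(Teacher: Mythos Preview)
Your framework is exactly the paper's: separate the $\Delta(G)$ colours on the star at a maximum-degree vertex from the colours on the middle of the induced path, and add. The paper's argument is a two-line sketch (``we need two new colours for each four consecutive edges'') and does not justify the disjointness step either, so in that sense you have reproduced its proof, only more carefully. Your verification that the restriction of $\phi$ to $Q$ is itself a dominated edge colouring is a genuine addition; the paper simply asserts the analogous fact. (One small slip: when $e_a$ shares its colour with $e_b$ and $b\in\{1,2,n-2,n-1\}$, the dominating edge $f=e_{a\pm1}$ need not lie in $E(Q)$, but then $e_b\notin E(Q)$ and the restricted class is the singleton $\{e_a\}$, which is still dominated inside $Q$; so the conclusion survives.)

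The gap you flag in Step~5 is real, and it is not confined to the case where $u$ sits on $Q$. Take $P_7=v_1\cdots v_7$ together with a new vertex $u$ adjacent to $v_1,v_4,v_7$; then $\Delta(G)=3$, the $v_i$ still induce a $P_7$, and the $5$-colouring $\phi(uv_1)=1,\ \phi(uv_4)=2,\ \phi(uv_7)=3,\ \phi(e_1)=2,\ \phi(e_2)=1,\ \phi(e_3)=3,\ \phi(e_4)=4,\ \phi(e_5)=5,\ \phi(e_6)=4$ is a valid dominated edge colouring with $A=\{1,2,3\}$, $B=\{\phi(e_3),\phi(e_4)\}=\{3,4\}$, and $A\cap B=\{3\}\neq\varnothing$. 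So $|A\cup B|=4<\Delta+2$, and the inequality you want cannot be read off from $|A|+|B|$ alone. Neither of your suggested repairs is available in general: ``sliding $Q$ away from $u$'' fails already when $G$ is $P_7$ with pendants at $v_4$ (the only induced $P_7$ has $u=v_4$ at its centre), and the ``direct count'' is not carried out. The paper's proof glosses over exactly this point, so you have matched it; but as written, both arguments leave the key step open.
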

\proof
We assign $\Delta (G)$ colors to the edges which are incident to the vertex with maximum degree $\Delta (G)$. Now we consider $P_7$ as induced subgraph of $G$. Since  we need two new colors for each four consecutive edges, so we have $\chi_{dom}^{\prime}(G)\geq \Delta (G)+2$. The proof of $\chi_{dom}^{\prime}(G)\geq \Delta (G)+\chi_{dom}^{\prime}(P_{n-4})$ is similar.
\qed

\begin{remark} 
	The graph $G$ in Figure \ref{fv2} and its coloring shows that the lower bound in Theorem \ref{rem} is sharp.
	\end{remark} 
\begin{figure}
	\begin{center}
		\includegraphics[width=0.4\textwidth]{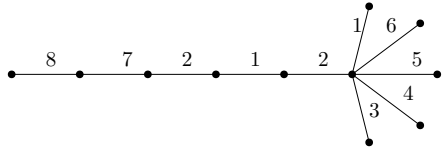}
		\caption{\label{fv2} Graph $G$ with $\chi_{dom}^{\prime}=\Delta (G)+2$.}
	\end{center}
\end{figure}

\begin{figure}
	\begin{center}
		\includegraphics[width=0.5\textwidth]{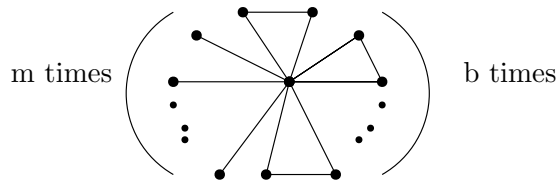}
		\caption{\label{wf1} The graph $G$ in the Theorem \ref{wf2}.}
	\end{center}
\end{figure}
\begin{theorem}
	\begin{enumerate}
		\item [(i)]  \label{wf3}
			If $a$ and $b$ are two integers with $a\geq b \geq 2$ such that $a\geq 2b$, then there exists a graph $G$ with dominated edge chromatic number $\chi_{dom}^{\prime}(G)=a$ and total edge domination number $\gamma_t^{\prime}(G)=b$.
			\item[(ii)]	\label{wf2}
		If $a$ and $b$ are two  integers with $a\geq b\geq 2$, then there exists a graph $G$ with dominated edge chromatic number $\chi_{dom}^{\prime}(G)=a$ and total edge domination number $\gamma_{t}^{\prime}(G)=b$.
	 			\end{enumerate}
\end{theorem}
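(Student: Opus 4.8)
The plan is to prove both parts at once, with a single flexible construction, using the extra hypothesis of (i) only to point out a tidier sub‑case. The idea is to glue a \emph{rigid core} that pins $\gamma_{t}^{\prime}$ to exactly $b$ together with a \emph{flexible bunch of pendant edges} that pushes $\chi_{dom}^{\prime}$ up to $a$ without touching $\gamma_{t}^{\prime}$. Explicitly, I would choose positive integers $t_{1},\dots ,t_{b}$ with $t_{1}+\dots +t_{b}=a$ — possible precisely because $a\ge b$ — and let $G=G(t_{1},\dots ,t_{b})$ be the graph consisting of a hub vertex $u$ joined by edges (``spokes'') to vertices $c_{1},\dots ,c_{b}$, where each $c_{i}$ in addition carries $t_{i}$ pendant leaves. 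When $a=b$ this is just $K_{1,b}$ with every edge subdivided once; when $a\ge 2b$ one may take every $t_{i}\ge 2$, which is the shape of part (i), and at the extreme $a=2b$ the friendship graph $F_{b}$ (for which $\chi_{dom}^{\prime}=2b$ by the earlier theorem, and $\gamma_{t}^{\prime}=b$ as one checks directly) also works.

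To show $\gamma_{t}^{\prime}(G)=b$: first, the $b$ spokes form a total edge dominating set, since any two of them are adjacent (they all meet at $u$, and $b\ge 2$) and every pendant edge at $c_{i}$ is adjacent to $uc_{i}$; hence $\gamma_{t}^{\prime}(G)\le b$. For the lower bound, a pendant edge at $c_{i}$ is adjacent only to edges incident with $c_{i}$, so any total edge dominating set must contain some edge incident with $c_{i}$; since no edge of $G$ is incident with two of the $c_{i}$'s, these $b$ demands concern pairwise disjoint edge sets and force $\gamma_{t}^{\prime}(G)\ge b$.

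To show $\chi_{dom}^{\prime}(G)=a$: the $b$ spokes are pairwise adjacent, so every proper edge colouring uses at least $b$ colours on them. The key structural claim I would prove is that \emph{no colour class of a dominated edge colouring of $G$ contains two pendant edges}: two pendant edges at the same $c_{i}$ are adjacent (hence differently coloured), while two pendant edges at distinct centres $c_{i},c_{j}$ have no common neighbour — an edge adjacent to both would have to be incident with both $c_{i}$ and $c_{j}$ — so a class containing both could never be of the form $C\subseteq N(e)$. It follows that every colour class lies inside (one spoke)$\,\cup\,$(one pendant edge); the $b$ spoke‑colours can therefore absorb at most $b$ of the $a$ pendant edges, and each of the remaining $\ge a-b$ pendant edges is alone in its class, yielding at least $b+(a-b)=a$ colours. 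For the matching upper bound I would colour the spokes $uc_{1},\dots ,uc_{b}$ by $1,\dots ,b$ and then, via a short Hall‑type matching argument (using $b\ge 2$ and $\sum t_{i}\ge b$), reassign to each colour $i$ one pendant edge hung at a centre $c_{j}$ with $j\ne i$, so that the class $\{uc_{i},\text{that pendant edge}\}$ is dominated by $uc_{j}$; the $a-b$ pendant edges left over receive private new colours $b+1,\dots ,a$, each forming a singleton class dominated by the spoke at its own centre. Checking that this colouring is proper and dominated gives $\chi_{dom}^{\prime}(G)\le a$, hence equality. This proves (ii), and (i) is the special case $a\ge 2b$.

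The one genuinely delicate step is the lower bound $\chi_{dom}^{\prime}(G)\ge a$: one must rule out colourings that cleverly reuse a single colour on many pendant edges, or pack several pendant edges into one spoke‑coloured class, and the ``no two pendant edges in a class'' claim is precisely the tool for this. The remaining ingredients — the two explicit colourings and $\gamma_{t}^{\prime}(G)\ge b$ — are routine, but I would check the boundary cases $b=2$ and $a=b$ (where $G$ collapses to a subdivided star) by hand, since there the Hall matching in the upper bound is the tightest.
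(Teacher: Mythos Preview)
Your argument is correct, but the construction differs from the paper's. The paper offers two separate, tersely stated examples: for (i) it takes the friendship graph $F_b$ and, when $a>2b$, attaches $a-2b$ extra pendant edges at the centre; for (ii) it takes $K_{1,a}$ and attaches one new leaf to each of $b$ of the degree-one vertices. In both cases the equalities $\chi_{dom}^{\prime}=a$ and $\gamma_t^{\prime}=b$ are asserted (with reference to a figure and the earlier computation $\chi_{dom}^{\prime}(F_b)=2b$) rather than argued in detail. Your single ``spider'' $G(t_1,\dots,t_b)$ handles both parts at once and comes with a full verification; in particular, your observation that no dominated colour class can contain two pendant edges is exactly the right lemma for the lower bound $\chi_{dom}^{\prime}(G)\ge a$. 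Two minor simplifications: once that lemma is in hand the count is immediate, since the $a$ pendant edges already force $a$ distinct classes without tracking the spokes; and for the upper bound the Hall argument can be replaced by choosing one pendant $p_j$ at each $c_j$ and colouring $p_{\sigma(i)}$ with colour $i$ for any derangement $\sigma$ of $\{1,\dots,b\}$ (which exists since $b\ge 2$). What your approach buys is a uniform construction with an explicit proof; what the paper's buys is brevity and direct reuse of previously computed values.
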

\proof
	\begin{enumerate}
		\item [(i)] 
	If $a=2b$, then we consider the friendship graph $F_b$. Therefore $\gamma_t^{\prime}(F_b)=b$ and $\chi_{dom}^{\prime}(F_b)=2b$. Now if $a>2b$, then we add $m=a-2b$ pendant edges to center of  the friendship graph $F_b$ and call this new graph $G$  (Figure \ref{wf1}). So $\chi_{dom}^{\prime}(G)=a$ and $\gamma_t^{\prime}(G)=b$.
		\item[(ii)]
		Consider the graph $K_{1,a}$ and connect  $b$ new vertices to $b$ verteices of  degree one (Figure \ref{wf4}). Therefore $\chi_{dom}^{\prime}(G)=a$ and $\gamma_t^{\prime}(G)=b$ \qed
			\end{enumerate}

The corona of $G$ and $H$ is denoted by $G\circ H$, is a graph made by a copy of $G$ (which has $n$ vertices) and $n$ copy of $H$ and joining the $i$-th vertex of $G$ to every vertex in the $i$-th copy of $H$. The following theorem gives the lower and upper bounds for the dominated edge chromatic number of $G\circ H$:
\begin{theorem}\label{corona}
Let $G$ and $H$ be two  graphs of orders  $n$  and $k$, respectively. Then
\begin{enumerate}
\item[i)] for even  $n$, $\lceil \frac{\vert E(G)\vert+\vert E(H) \vert+nk}{2}\rceil\leq \chi_{dom}^{\prime}(G\circ H)\leq \chi_{dom}^{\prime} (G)+n \chi_{dom}^{\prime} (H)+\frac{nk}{2}$,
\item[ii)] for odd  $n$, $\lceil \frac{\vert E(G)\vert+\vert E(H) \vert +nk}{2}\rceil\leq \chi_{dom}^{.\prime}(G\circ H)\leq \chi_{dom}^{\prime} (G)+n \chi_{dom}^{\prime} (H)+\frac{(n-1)k}{2}+k$.
\end{enumerate}
\end{theorem}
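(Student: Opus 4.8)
The plan is to obtain the lower bound directly from Remark~\ref{aa} and the upper bound from an explicit dominated edge coloring of $G\circ H$ built in three stages. First I would note that $G\circ H$ has exactly $|E(G)|+n|E(H)|+nk$ edges: those of the copy of $G$, the $n\,|E(H)|$ edges inside the $n$ copies $H_1,\dots,H_n$ of $H$, and the $nk$ connecting edges joining $v_i$ to all of $H_i$. Since a dominated edge coloring uses every color at most twice (Remark~\ref{aa}), $\chi'_{dom}(G\circ H)\ge\lceil(|E(G)|+n|E(H)|+nk)/2\rceil$, which already implies the stated lower bound, with no dependence on the parity of $n$.

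For the upper bound I would color $G\circ H$ in three stages. Stage (1): place a minimum dominated edge coloring of $G$ on the copy of $G$; it stays proper, and each color class stays dominated because the edge-neighborhood of an edge of $G$ only grows inside $G\circ H$. This uses $\chi'_{dom}(G)$ colors. Stage (2): for each $i$, color the edges of $H_i$ by a minimum dominated edge coloring of $H$, using a palette $\mathcal B_i$ disjoint from everything used so far; edges of distinct copies $H_i,H_j$ are non-adjacent, so this is proper, and every class stays dominated by the same edge of $H_i$. This adds $n\,\chi'_{dom}(H)$ colors. Stage (3): color the $nk$ connecting edges, keeping in mind that the $k$ of them meeting $v_i$ form a star, hence need $k$ distinct colors, each also different from the colors already on the edges of $G$ and of $H_i$ incident to it.

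Stage (3) carries the real content. For even $n$, take a perfect matching of $G$; for each matching edge $v_iv_j$, give the stars at $v_i$ and at $v_j$ one common set of $k$ new colors, matching the $\ell$-th connecting edge at $v_i$ with the $\ell$-th at $v_j$. These two edges lie in different copies, hence are non-adjacent, so properness is preserved; the cost is $\tfrac{n}{2}\cdot k=\tfrac{nk}{2}$ new colors; and each class $\{v_iw_\ell,v_jw_\ell\}$ is dominated by the edge $v_iv_j$, which is adjacent to both of its members. Summing the three stages gives the even bound. For odd $n$, take a near-perfect matching and treat the single uncovered star with $k$ extra fresh colors, each resulting singleton class being dominated by some other edge at that vertex; this gives $\tfrac{(n-1)k}{2}+k$ new colors, hence the odd bound.

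The point that needs care is the domination claim in Stage (3). Two connecting edges $v_iw$ and $v_ju$ with $i\ne j$ have a common edge-neighbor in $G\circ H$ precisely when $v_iv_j\in E(G)$: an edge adjacent to $v_iw$ is incident with $v_i$ or with a vertex of $H_i$, an edge adjacent to $v_ju$ is incident with $v_j$ or with a vertex of $H_j$, and the only edge meeting both conditions is $v_iv_j$. So the connecting edges can share colors only across an edge of $G$, and the clean counts $\tfrac{nk}{2}$ and $\tfrac{(n-1)k}{2}+k$ are exactly what a (near-)perfect matching of $G$ supplies. For a general $G$ without such a matching one would pair as many stars as possible along a maximum matching and let each remaining connecting edge $v_iw$ re-use the color of a singleton class consisting of an edge $f$ of $G$ incident to a neighbor of $v_i$ but not to $v_i$, so that $\{f,v_iw\}$ is dominated by the $G$-edge at $v_i$ pointing to that neighbor; checking that this still fits inside the claimed color budget is the loose end I would expect to be the hardest part.
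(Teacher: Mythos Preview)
Your three-stage construction is exactly the paper's approach: the lower bound is read off from Remark~\ref{aa} and the edge count, and the upper bound comes from coloring $G$, then the $n$ copies of $H$ with disjoint palettes, then the $nk$ connecting edges. The paper's treatment of the connecting edges is in fact terser than yours---it simply asserts that they can be colored with $\tfrac{nk}{2}$ (respectively $\tfrac{(n-1)k}{2}+k$) new colors and declares the result a dominated edge coloring, without explaining how the pairing is done or why each size-two class is dominated.

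The issue you raise in your final paragraph is therefore not a defect of your proposal relative to the paper but a genuine gap in the paper's own argument. Your analysis is correct: two connecting edges $v_iw$ and $v_ju$ with $i\neq j$ have a common edge-neighbor in $G\circ H$ if and only if $v_iv_j\in E(G)$, so pairing the connecting stars two-by-two with shared colors is possible precisely along a matching of $G$. For graphs $G$ without a (near-)perfect matching---e.g.\ $G=K_{1,3}$ with $n=4$ even---the counts $\tfrac{nk}{2}$ and $\tfrac{(n-1)k}{2}+k$ are not achieved by this scheme, and neither the paper nor your proposed workaround (re-using singleton classes from the coloring of $G$) supplies a complete argument that the stated bound still holds in that generality.
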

\proof
The left inequality follows as Remark \ref{aa}. For the right inequality, we consider a dominated edge coloring for $G$ and $H$ and color $G$ and $n$ copy of $H$ with $\chi_{dom}^{\prime} (G)+n \chi_{dom}^{\prime} (H)$ colors. Then, we color the other edges with $\frac{nk}{2}$ colors, if $n$ is even and with $\frac{(n-1)k}{2}+k$ colors, if $n$ is odd. This is a dominated edge coloring for $G \circ H$ and so we have the result.\qed

\begin{theorem}
Suppose that in a dominated edge coloring (with minimum number of colors) of two graphs $G$ and $H$ of orders $n$ and $k$, respectively, there is no  color class with size one. Then
 \begin{enumerate}
\item[i)] for even  $n$, $\chi_{dom}^{\prime}(G\circ H)=\chi_{dom}^{\prime} (G)+n \chi_{dom}^{\prime} (H)+\frac{nk}{2}$,
\item[ii)] for odd  $n$, $\chi_{dom}^{\prime}(G\circ H)= \chi_{dom}^{\prime} (G)+n \chi_{dom}^{\prime} (H)+\frac{(n-1)k}{2}+k$.
\end{enumerate}
\end{theorem}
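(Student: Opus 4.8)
The plan is to derive the upper bounds from Theorem~\ref{corona} and then supply matching lower bounds, treating the two parities separately. The hypothesis enters as follows: if a minimum dominated edge coloring of a graph $F$ has no class of size one, then every class has size exactly two (Remark~\ref{aa}), so $\chi_{dom}^{\prime}(F)=\frac{|E(F)|}{2}$ and $|E(F)|$ is even. Applying this to $G$ and to $H$ and substituting into the bounds of Theorem~\ref{corona}, the claimed values become
\[
\tfrac12\bigl(|E(G)|+n|E(H)|+nk\bigr)\ \ (n\text{ even}),\qquad
\tfrac12\bigl(|E(G)|+n|E(H)|\bigr)+\tfrac{(n-1)k}{2}+k\ \ (n\text{ odd}),
\]
and these are already established as upper bounds, so only the lower bounds remain.

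For even $n$ the lower bound is immediate from Remark~\ref{aa}: since $|E(G)|$, $|E(H)|$ are even and $n$ is even, $|E(G\circ H)|=|E(G)|+n|E(H)|+nk$ is even, hence $\chi_{dom}^{\prime}(G\circ H)\ge\lceil\tfrac12|E(G\circ H)|\rceil=\tfrac12\bigl(|E(G)|+n|E(H)|+nk\bigr)$, which is exactly the upper bound. This settles part (i).

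For odd $n$ the situation is different, because the target value strictly exceeds $\lceil\tfrac12|E(G\circ H)|\rceil$, so Remark~\ref{aa} no longer suffices and this is where the real argument must go. The approach I would try: fix a minimum dominated edge coloring $c$ of $\Gamma:=G\circ H$; by Remark~\ref{aa} every class has one or two edges, so writing $s$ (resp.\ $p$) for the number of size-one (resp.\ size-two) classes we have $s+2p=|E(\Gamma)|$ and $\chi_{dom}^{\prime}(\Gamma)=|E(\Gamma)|-p$, and the claim reduces to $s\ge k$. The structural input is a locality lemma: if a two-edge class of $c$ meets the edge set of the $i$-th copy $H_i$, its other edge is incident to $V(H_i)\cup\{v_i\}$, since a common dominator of the two edges must share a vertex with the $H_i$-edge and hence lie among the edges incident to $V(H_i)\cup\{v_i\}$, forcing the partner edge to be incident there as well. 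One would then combine this with the observation that the $k$ edges of the join star $J_i$ at $v_i$ pairwise share $v_i$ and so lie in $k$ distinct classes, and with the fact that $n$ odd makes it impossible to pair all $n$ copies off through edges of $G$, in order to isolate one copy whose join star cannot be ``absorbed'' into a neighbouring copy and conclude that its $k$ edges contribute $k$ classes appearing nowhere else; the hypothesis $\chi_{dom}^{\prime}(H)=\tfrac12|E(H)|$ would be invoked to deny $H_i$ any spare capacity for such absorption.

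The main obstacle is exactly this last point, and I would regard it as serious rather than merely technical. The $k$ edges of a join star $J_i=\{v_ix:x\in V(H_i)\}$ can often be paired off not across $G$ but \emph{inside the copy}: each $v_ix$ with an edge of $H_i$ not through $x$, dominated by another join edge at $v_i$. When $H$ has a dominated edge coloring in which every class has size two and $|E(H)|=k$, one can pair every $J_i$ entirely against $E(H_i)$ using only $\tfrac12(|J_i|+|E(H_i)|)$ classes per copy, and combined with an optimal coloring of $G$ this yields a dominated edge coloring of $\Gamma$ with only $\lceil\tfrac12|E(\Gamma)|\rceil$ colors even when $n$ is odd. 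So before attempting to push the lower-bound argument through I would re-examine the odd-$n$ formula itself on such instances; as stated it appears to overcount, and the honest outcome of the plan is that (ii) holds with ``$\le$'' by Theorem~\ref{corona}, while the matching lower bound — and hence the equality — is the genuinely hard claim and does not seem to survive in general.
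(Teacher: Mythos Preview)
Your treatment of part (i) is correct and is precisely what the paper does: its entire proof reads ``It is similar to the proof of Theorem~\ref{corona}.'' Under the hypothesis one has $\chi_{dom}^{\prime}(G)=\tfrac12|E(G)|$ and $\chi_{dom}^{\prime}(H)=\tfrac12|E(H)|$, so for even $n$ the upper bound of Theorem~\ref{corona} coincides with $\lceil\tfrac12|E(G\circ H)|\rceil$, and Remark~\ref{aa} closes the gap. Nothing more is needed, and nothing more is offered in the paper.

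For part (ii) your diagnosis is sharper than the paper's argument. The paper supplies no lower bound beyond the one inherited from Theorem~\ref{corona}, namely $\lceil\tfrac12|E(G\circ H)|\rceil$, and that bound falls short of the asserted value by $\lceil k/2\rceil$ when $n$ is odd. Your suspicion that the equality can fail is correct: take $G$ to be $C_5$ together with one chord (so $n=5$, $|E(G)|=6$, and one checks directly that $\chi_{dom}^{\prime}(G)=3$ with all three classes of size two) and $H=C_4$ (so $k=4$, $\chi_{dom}^{\prime}(H)=2$ with both classes of size two). The formula in (ii) predicts $3+5\cdot 2+\tfrac{4\cdot 4}{2}+4=25$, but the construction you outline---pairing each join edge $v_ix$ with the unique $H_i$-edge disjoint from $x$, dominated by another join edge at $v_i$---uses only $4$ colours per copy, hence $20$ colours for all copies and their join stars, plus $3$ for $E(G)$, giving a dominated edge colouring with $23=\lceil\tfrac12|E(G\circ H)|\rceil$ colours. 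So the odd-$n$ equality is not true in general, and there is no gap in \emph{your} reasoning to repair; the gap is in the statement itself, which the paper's one-line proof does not address.
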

\proof
It is similar to the proof of Theorem \ref{corona}.\qed
\begin{remark}
If $t$ is the number of color classes of size one in a dominated edge coloring  of two graphs  $G$ and $H$ of order $n$ and $k$, respectively, then  
$$\chi_{dom}^{\prime}(G\circ H)\leq \chi_{dom}^{\prime} (G)+n \chi_{dom}^{\prime} (H)+\lfloor \frac{n}{2} \rfloor k-t.$$
\end{remark}

 The following theorem gives an upper bound for $\chi_{dom}^{\prime}(G+H)$:
\begin{theorem}
For two connected graph $G$ and $H$,
\begin{equation*}
\chi_{dom}^{\prime}(G+H)\leq \chi_{dom}^{\prime}(G)+\chi_{dom}^{\prime}(H)+\lceil \frac{\vert V(G)\vert \times \vert V(H) \vert}{2}\rceil.
\end{equation*}
\end{theorem}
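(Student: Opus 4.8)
The plan is to build a dominated edge coloring of $G+H$ by reusing optimal colorings of the two factors and coloring the ``join edges'' (the copy of $K_{|V(G)|,|V(H)|}$ joining $V(G)$ to $V(H)$) with a fresh set of colors. Write $m=|V(G)|$ and $n=|V(H)|$, and assume $m,n\ge 2$ (for a connected graph this just says it has an edge; for $K_1$ the bound needs reinterpreting). First I would fix a minimum dominated edge coloring of $G$ and a minimum dominated edge coloring of $H$, using disjoint palettes, so that $\chi_{dom}^{\prime}(G)+\chi_{dom}^{\prime}(H)$ colors are spent on the edges internal to $G$ and to $H$. Every color class coming from $G$ is contained in $N(e)$ for some $e\in E(G)\subseteq E(G+H)$, and likewise for $H$, so these classes stay dominated in $G+H$; and since the join edges will receive a third, disjoint palette, properness across the three parts is automatic, because an internal edge and a join edge that share a vertex carry colors from different palettes. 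Thus the whole problem reduces to coloring the $mn$ join edges with $\lceil mn/2\rceil$ new colors so that the result is proper and each class is dominated by some edge of $G+H$.

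The key observation is a cheap way to dominate pairs of join edges: if $u_iv_j$ and $u_{i'}v_{j'}$ are non-adjacent join edges (equivalently $i\ne i'$ and $j\ne j'$), then the join edge $u_iv_{j'}$ is adjacent to both, so $\{u_iv_j,u_{i'}v_{j'}\}\subseteq N(u_iv_{j'})$; and a lone join edge $u_iv_j$ is dominated by any other join edge at $u_i$, which exists since $n\ge 2$. By Remark \ref{aa} each color may be used at most twice, so $\lceil mn/2\rceil$ classes on $mn$ edges forces every class to be a matching of size $1$ or $2$. Hence it suffices to partition $E(K_{m,n})$ (the join) into $\lfloor mn/2\rfloor$ pairs of non-adjacent edges together with one leftover edge when $mn$ is odd: a total of $\lceil mn/2\rceil$ classes, each dominated automatically by the observation above, and proper since each class is a matching.

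It then remains to produce such a partition. When $mn$ is even, at least one of $m,n$ is even; say $n$ is even. Group the $H$-side vertices into consecutive pairs $\{v_1,v_2\},\{v_3,v_4\},\dots$, fix a fixed-point-free permutation $\sigma$ of $\{1,\dots,m\}$ (which exists because $m\ge 2$), and for each such pair place $u_iv_{2\ell-1}$ and $u_{\sigma(i)}v_{2\ell}$ in one class; these two edges lie on different $H$-vertices and on different $G$-vertices, hence are non-adjacent, and every join edge is used exactly once. When $m$ and $n$ are both odd, I would first peel off even-width blocks of $H$-side columns and then even-height blocks of $G$-side rows, each block handled exactly as above, reducing to the single case $K_{3,3}$, for which one exhibits the four pairs and the one leftover edge by hand. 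Combining the three palettes finishes the construction and gives $\chi_{dom}^{\prime}(G+H)\le \chi_{dom}^{\prime}(G)+\chi_{dom}^{\prime}(H)+\lceil mn/2\rceil$.

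The one genuinely fiddly point is the both-odd case of the $K_{m,n}$ decomposition: a naive ``grid'' pairing tends to leave an entire row or column uncovered, and the edges in such a strip are pairwise adjacent, so some care (the peeling argument, or equivalently finding a near-perfect matching in the complement of the rook's graph $R_{m,n}$) is needed. Everything else — properness, disjointness of the palettes, and the domination of each class — is immediate once one notes that a crossing pair of join edges is dominated by a third join edge.
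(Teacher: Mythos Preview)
Your approach is correct and is exactly the paper's: color $E(G)$, $E(H)$, and the join edges with three disjoint palettes of the stated sizes. The paper's proof is a one-line sketch that omits precisely the details you supply for the $K_{m,n}$ part --- how to partition the join edges into $\lceil mn/2\rceil$ non-adjacent classes and why each such class is dominated --- so your write-up is strictly more thorough than the original.
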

\proof
We color the graph $G$ with $\chi_{dom}^{\prime}(G)$ and the graph $H$ with $\chi_{dom}^{\prime}(H)$ colors and other edge with $\lceil \frac{\vert V(G)\vert \times \vert V(H) \vert}{2}\rceil$ new colors.
So this is a dominated edge coloring. Note that if $\chi_{dom}^{\prime}(G)=\frac{\vert E(G)\vert}{2}$ and $\chi_{dom}^{\prime}(H)=\frac{\vert E(H)\vert}{2}$, then the inequality is sharp.\qed

\begin{figure}
	\begin{center}
		\includegraphics[width=0.4\textwidth]{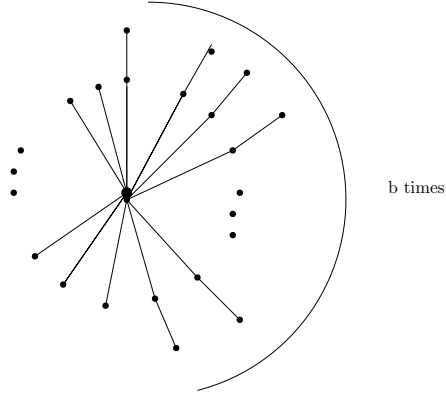}
		\caption{\label{wf4} The graph $G$ in the Theorem \ref{wf3}.}
	\end{center}
\end{figure}


\section{Dominated edge chromatic number of some operations on a graph}
 In this section, we examine the effects on $\chi_{dom}^{\prime}(G)$ when $G$ is modified by operations on vertex and edge of $G$.
The graph $G-v$ is a graph that is made by deleting the vertex $v$ and all edges incident
to $v$ from the graph $G$ and the graph $G-e$ is a graph that obtained from $G$ by simply
removing the edge $e$. We present bounds for dominated edge chromatic number of $G -v$
and $G-e$. We begin with $G-v$.

\begin{theorem}\label{delver}
If $G$ is a connected graph and $v\in V(G)$ is not a cut vertex of $G$, then 
\begin{equation*}
\chi_{dom}^{\prime}(G)-deg(v)\leq \chi_{dom}^{\prime}(G-v) \leq \chi_{dom}^{\prime}(G)+deg(v).
\end{equation*}
\end{theorem}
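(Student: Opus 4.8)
The strategy is to establish the two inequalities separately by modifying colorings in each direction. For the \emph{upper bound} $\chi_{dom}^{\prime}(G-v)\leq \chi_{dom}^{\prime}(G)+\deg(v)$, I would start from an optimal dominated edge coloring of $G-v$ using $\chi_{dom}^{\prime}(G-v)$ colors and try to extend it to $G$ by reinserting $v$. The $\deg(v)$ edges incident to $v$ must receive colors; assigning each of them a brand-new color uses $\deg(v)$ extra colors, and each new singleton color class $\{vu\}$ is dominated by any edge of $G-v$ incident to $u$ (such an edge exists because $G$ is connected and $v$ is not a cut vertex, so every neighbor $u$ of $v$ has another neighbor in $G-v$, or at worst we use a second edge at $v$ — care is needed here for pendant neighbors). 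Meanwhile the old color classes of $G-v$ are still dominated by their old dominating edges, which survive in $G$. Wait — I actually need this reading as stated: the displayed inequality has $\chi_{dom}^{\prime}(G)$ on the left of the upper bound and $\chi_{dom}^{\prime}(G-v)$ in the middle, so the roles are reversed: I must bound $\chi_{dom}^{\prime}(G-v)$ above by $\chi_{dom}^{\prime}(G)+\deg(v)$.

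So for the upper bound I instead begin with an optimal dominated edge coloring of $G$ and restrict it to $G-v$. Removing $v$ deletes the $\deg(v)$ edges at $v$; the remaining edges keep their colors, which is still a proper coloring. The problem is that some color class of $G$ may have been dominated \emph{only} by an edge incident to $v$, and that edge is now gone, so the class is orphaned. Since each color class has size at most $2$ (Remark~\ref{aa}) and there are $\chi_{dom}^{\prime}(G)$ classes, at most $\deg(v)$ classes can lose a dominating edge this way (each deleted edge was the unique dominator of at most a bounded number of classes — more carefully, at most $\deg(v)$ classes become undominated). For each orphaned class, recolor its (at most two) edges with fresh colors — a singleton class or a pair of adjacent edges is trivially dominated by a suitable nearby edge in the connected graph $G-v$. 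This adds at most $\deg(v)$ colors while possibly removing some, giving $\chi_{dom}^{\prime}(G-v)\leq\chi_{dom}^{\prime}(G)+\deg(v)$.

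For the \emph{lower bound} $\chi_{dom}^{\prime}(G)-\deg(v)\leq \chi_{dom}^{\prime}(G-v)$, equivalently $\chi_{dom}^{\prime}(G)\leq\chi_{dom}^{\prime}(G-v)+\deg(v)$, I take an optimal dominated edge coloring of $G-v$, reinsert $v$, and color the $\deg(v)$ new edges at $v$ with $\deg(v)$ brand-new colors. Each such new class is a singleton $\{vu\}$; since $v$ is not a cut vertex of the connected graph $G$, $G-v$ is connected and has at least one edge (assuming $|V(G)|\geq 3$; the small cases are trivial), so for every neighbor $u$ of $v$ there is an edge incident to $u$ in $G-v$ unless $u$ is isolated in $G-v$ — but $u$ cannot be isolated in $G-v$ since then $v$ would be a cut vertex separating $u$. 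Hence $\{vu\}\subseteq N(e)$ for an appropriate $e$, so every new class is dominated. The old classes remain dominated by their old dominating edges. This yields a dominated edge coloring of $G$ with $\chi_{dom}^{\prime}(G-v)+\deg(v)$ colors.

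\textbf{Main obstacle.} The delicate point in both directions is verifying that newly created singleton classes at $v$, or orphaned classes after deletion, really are dominated — this is exactly where the hypothesis that $v$ is not a cut vertex is used, guaranteeing $G-v$ stays connected so that every relevant vertex retains an incident edge. A secondary subtlety is the counting in the upper bound: one must argue that deleting the $\deg(v)$ edges at $v$ destroys the dominating edge of at most $\deg(v)$ color classes, using Remark~\ref{aa} together with the fact that a single edge can be the "chosen" dominator of boundedly many classes; handling ties and re-selection of dominators cleanly (always pick, for each surviving class, any surviving dominating edge, and only recolor those classes with no surviving dominator) is the bookkeeping that needs care.
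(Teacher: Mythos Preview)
Your plan follows essentially the same two-direction strategy as the paper: for the lower bound, extend an optimal dominated edge coloring of $G-v$ to $G$ by assigning the $\deg(v)$ reinserted edges fresh colors; for the upper bound, start from an optimal coloring of $G$, delete $v$, and repair the damaged classes using at most $\deg(v)$ new colors. You are actually more explicit than the paper about where the non-cut-vertex hypothesis enters (guaranteeing each neighbor of $v$ retains an incident edge in $G-v$) and about the orphaned-class bookkeeping, which the paper's proof treats rather loosely.
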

\proof
First we prove the left inequality. We give a dominated edge coloring to $G-v$, add $v$ and all the corresponding edges. Then we assign $deg(v)$ new colors to these edges and do not changes the color of other edges. So this is a dominated edge coloring of $G$ and $\chi_{dom}^{\prime}(G)\leq \chi_{dom}^{\prime}(G-v)+deg(v)$.\\
For the right inequality, first we give a dominated edge coloring to $G$. In this case, since $v$ is not a cut vertex, each edges which is adjacent to an edge with endpoint $v$ has an other adjacent edge too. We change the color of this edge to a new color and do this $deg(v)$ times and do not change the color of the other edges. So this is an edge dominated edge coloring of $G-v$ and $\chi_{dom}^{\prime}(G-v) \leq \chi_{dom}^{\prime}(G)+deg(v)$.
Therefore we have the result.\qed

\begin{remark} 
 The upper bound in Theorem \ref{delver} is sharp. Consider the graph $G$ in Figure \ref{dv1}.
 \end{remark} 
 \begin{figure}
	\begin{center}
		\includegraphics[width=0.2\textwidth]{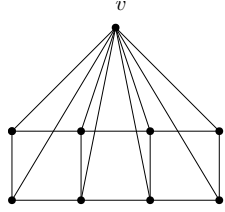}
		\caption{\label{dv1} The graph $G$ in the Theorem \ref{delver}.}
	\end{center}
\end{figure}

 By considering  the graph in Figure \ref{dv2}, we have the following result. 
 \begin{theorem}\label{delvv}
 There is a connected graph $G$ and a vertex $v\in V(G)$ which is not a cut vertex of $G$ such that $\vert\chi_{dom}^{\prime}(G) - \chi_{dom}^{\prime}(G-v)\vert$ can be arbitrarily  large.
 \end{theorem}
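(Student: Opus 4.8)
The plan is to exhibit an explicit family of graphs $G_k$, each with a non-cut vertex $v_k$, for which $\chi_{dom}^{\prime}(G_k)$ and $\chi_{dom}^{\prime}(G_k - v_k)$ differ by an amount growing with $k$. The natural candidate, and the one the figure presumably depicts, is built from a long path (or a long even cycle, so that deleting $v$ keeps the graph connected): a path $P_n$ forces roughly $n/2$ colors by Remark~\ref{aa}, while attaching $v$ adjacent to many well-placed vertices of the path can collapse this count dramatically, since one high-degree vertex supplies many dominating edges at once. Concretely I would take $v$ adjacent to a set of vertices spread along the path so that every color class of the path-edges is dominated by an edge at $v$; then the edges of $G$ incident to $v$ give $\deg(v)$ colors, but the path edges no longer need their own separate dominating edges, so $\chi_{dom}^{\prime}(G)$ is essentially $\Delta(G) = \deg(v)$ plus a small constant, whereas $\chi_{dom}^{\prime}(G-v)$ is about $n/2$. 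Choosing $n$ much larger than $\deg(v)$ makes the gap $\vert\chi_{dom}^{\prime}(G) - \chi_{dom}^{\prime}(G-v)\vert$ as large as desired.

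The steps, in order, are: (1) describe the graph $G_k$ precisely — say, a cycle $C_{2k}$ (or path) together with one extra vertex $v$ joined to a bounded number of its vertices chosen to hit all parts of the cycle — and verify $v$ is not a cut vertex (deleting it leaves the cycle/path, which is connected); (2) compute or bound $\chi_{dom}^{\prime}(G_k - v_k)$ from below using Remark~\ref{aa} (the $\lceil m/2\rceil$ bound) or, more sharply, the formula in Theorem~\ref{path} for cycles/paths, giving a quantity linear in $k$; (3) give an explicit dominated edge coloring of $G_k$ using only $O(1)$ colors beyond $\deg(v)$ — assign fresh colors to the spokes at $v$ and reuse them on the cycle edges, checking properness and that each class is dominated by a spoke; (4) conclude the difference is unbounded as $k\to\infty$.

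The main obstacle is step~(3): arranging the attachment vertices of $v$ and the coloring of $G_k$ so that \emph{every} color class — including all the cycle edges not incident to $v$ — is dominated by some edge, while keeping the total number of colors bounded independently of $k$. This requires that the "long" part of the graph be coverable by the neighborhoods of just a few edges, which is exactly why $v$ must have its neighbors distributed along the cycle (each spoke $vu$ dominates the two cycle edges at $u$ plus the other spokes, so a constant number of spokes with neighbors every third vertex or so can dominate all cycle edges). I would also double-check the degenerate cases (small $k$, parity of the cycle length) and make sure the claimed coloring is genuinely proper at the hub vertex $v$, where $\deg(v)$ distinct colors are needed on the spokes. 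Once the covering/coloring scheme is pinned down, the inequality and the "arbitrarily large" conclusion follow immediately by letting the cycle length tend to infinity.
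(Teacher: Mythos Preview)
Your plan has a genuine gap, and it lies exactly in step~(3), which you yourself flag as the main obstacle --- but the obstruction is more serious than a matter of ``pinning down'' the coloring scheme. You are trying to attach $v$ to a \emph{bounded} number of vertices of a long cycle $C_{2k}$ and then exhibit a dominated edge coloring of the resulting $G_k$ using only $\deg(v)+O(1)$ colors, with $\deg(v)$ fixed independently of $k$. This is impossible. By Remark~\ref{aa} every color class has size at most~$2$, so $\chi_{dom}^{\prime}(G_k)\ge \lceil |E(G_k)|/2\rceil$. Since $G_k$ contains all $2k$ edges of the cycle, this forces $\chi_{dom}^{\prime}(G_k)\ge k$, regardless of how cleverly the spokes are placed or the colors are reused. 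Consequently $\chi_{dom}^{\prime}(G_k)$ and $\chi_{dom}^{\prime}(G_k-v)=\chi_{dom}^{\prime}(C_{2k})$ are both of order $k$, and with $\deg(v)$ bounded Theorem~\ref{delver} even pins their difference to a bounded window. Your heuristic ``$\chi_{dom}^{\prime}(G)$ is essentially $\Delta(G)=\deg(v)$'' conflates the lower bound of Remark~\ref{zz} with an equality and ignores the stronger lower bound coming from Remark~\ref{aa}.

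The remedy is to run the idea in the opposite direction: let $\deg(v)$ \emph{grow} with the graph, so that deleting $v$ removes many edges and the $\lceil m/2\rceil$ bound drops sharply. The paper's example (given only by a figure) is of this type; the wheel $W_n$ with $v$ the hub already does the job using results proved earlier: $v$ is not a cut vertex, $\chi_{dom}^{\prime}(W_n)=n-1$ by Theorem~\ref{path}(iv), while $\chi_{dom}^{\prime}(W_n-v)=\chi_{dom}^{\prime}(C_{n-1})\approx (n-1)/2$ by Theorem~\ref{path}(i), so the gap is about $(n-1)/2$ and tends to infinity.
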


\begin{figure}
	\begin{center}
		\includegraphics[width=0.4\textwidth]{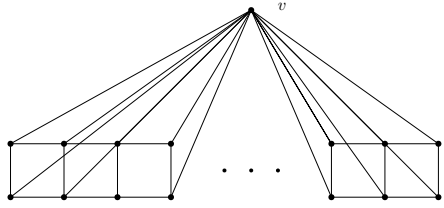}
		\caption{\label{dv2} The graph $G$ in the Theorem \ref{delvv}.}
	\end{center}
\end{figure}
 \begin{theorem}\label{deledge}
 If $G$ is a connected graph, and $e=uv\in E(G)$ is not a bridge of $G$, then 
 \begin{equation*}
\chi_{dom}^{\prime}(G)-1\leq \chi_{dom}^{\prime}(G-e) \leq \chi_{dom}^{\prime}(G)+deg(v)-2.
\end{equation*}
\end{theorem}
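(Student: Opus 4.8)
The plan is to establish the two inequalities separately by locally perturbing optimal dominated edge colorings.

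For the left inequality $\chi_{dom}^{\prime}(G)-1\le\chi_{dom}^{\prime}(G-e)$, I would take a minimum dominated edge coloring of $G-e$ and reinstate $e=uv$ with a new color. The coloring stays proper, and each old color class is still dominated by the same edge (it lies in $G-e\subseteq G$, and putting $e$ back only enlarges open neighborhoods). The new class $\{e\}$ is dominated because $e$ has a neighbor in $G$: since $e$ is not a bridge, $G-e$ is connected on at least three vertices, so $u$ is incident to some edge $f\ne e$, and $\{e\}\subseteq N(f)$. Hence $\chi_{dom}^{\prime}(G)\le\chi_{dom}^{\prime}(G-e)+1$.

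For the right inequality, fix a minimum dominated edge coloring $\phi$ of $G$ with $k=\chi_{dom}^{\prime}(G)$ colors and delete $e$. I would first identify the classes that can fail to be dominated in $G-e$: a class needs attention only if its sole dominator in $G$ was $e$, i.e.\ only if it is contained in $N(e)$, the set of the $deg(u)-1$ edges incident to $u$ other than $e$ together with the $deg(v)-1$ edges incident to $v$ other than $e$. A singleton $\{g\}\subseteq N(e)$ survives, since $g$ is incident to $u$ or $v$ and, $e$ not being a bridge, $g$ is not isolated in $G-e$. The class of $e$ itself is either $\{e\}$, which disappears, or $\{e,e'\}$, which becomes the singleton $\{e'\}$, still dominated by the edge (necessarily $\ne e$) that dominated $\{e,e'\}$. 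So the only classes that can break are two-element classes $B_i=\{ux_i,vy_i\}$ with one edge at $u$ and one at $v$; since the edges $vy_i$ are distinct edges incident to $v$ other than $e$, there are at most $deg(v)-1$ of them, say $B_1,\dots,B_b$ with $b\le deg(v)-1$.

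I would then repair each $B_i$ by recoloring $vy_i$ with a fresh color: $B_i$ becomes the singleton $\{ux_i\}$, dominated because $ux_i$ is non-isolated in $G-e$, and $\{vy_i\}$ is a new singleton, dominated because $vy_i$ still has a neighbor (at $v$ or at $y_i$) in $G-e$ — once more using that $e$ is not a bridge; every untouched class keeps a dominator present in $G-e$. This yields a dominated edge coloring of $G-e$ with at most $k+b\le k+deg(v)-1$ colors, and it uses one color fewer whenever the class of $e$ in $\phi$ is a singleton (that color vanishes) or $b\le deg(v)-2$, in which cases the stated bound $k+deg(v)-2$ follows. The remaining configuration — exactly $deg(v)-1$ broken classes, with $e$ in a two-element class $\{e,e'\}$ — is where the real work is: here I would recolor all but one of the $vy_i$ with fresh colors and reuse the old color of $e$ on the last broken class (assigning it to $vy_j$, or to $ux_j$), picking $j$ so that $e'$ avoids the relevant endpoint and checking that the enlarged class is still dominated; this works directly once $deg(v)$ is not small, and the few remaining low-degree configurations are pinned down from the forbidden-edge constraints forced by ``$B_i$ is dominated only by $e$''. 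I expect this final color-accounting step in the extremal case to be the main obstacle; everything else is a routine check that edge deletion and fresh colors preserve properness and domination.
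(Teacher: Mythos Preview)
Your approach coincides with the paper's: for each inequality you perturb an optimal dominated edge coloring and repair locally, and your treatment of the left inequality is essentially the paper's argument. For the right inequality, the paper likewise starts from a minimum coloring of $G$ (after first assuming without loss of generality that $\deg(u)\ge\deg(v)$, which you should also record), deletes $e$, and repairs the classes that lose their only dominator.

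The difference is precisely at the step you flag as the obstacle. Your analysis cleanly yields $\chi_{dom}^{\prime}(G-e)\le\chi_{dom}^{\prime}(G)+\deg(v)-1$ and correctly isolates the single extremal configuration in which one more color must be saved; but your proposed fix (reuse the old color of $e$ on one broken pair, then handle small degrees by ad hoc forbidden-edge constraints) is left as a sketch. The paper's proof at this point is brief but names the intended mechanism: it invokes the alternate $u$--$v$ path guaranteed by the non-bridge hypothesis, together with the fact that every color class has size at most two and the computation of $\chi_{dom}^{\prime}(P_n)$, to argue that two previous colors --- the color of $e$ and one further color along that path --- can be reused, so that only $\deg(v)-2$ fresh colors are needed. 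In your write-up the non-bridge hypothesis is used only to guarantee that individual edges stay non-isolated; to close the gap along the paper's line you should exploit the $u$--$v$ path more structurally to locate the second reusable color, rather than rely on a case analysis of low-degree configurations.
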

\proof
First we prove the left inequality. We give a dominated edge coloring to $G-e$, then we add edge $e$. If we can give one of the previous colors to $e$, then $\chi_{dom}^{\prime}(G-e) =\chi_{dom}^{\prime}(G)$. Otherwise, we assign new color $i$ to edge $e$. So we have a dominated edge coloring for $G$ and $\chi_{dom}^{\prime}(G)\leq \chi_{dom}^{\prime}(G-e)+1$. \\
Now we prove the right inequality. Let $e=uv$ and $deg(u)\geq deg(v)$. If $e$ just dominates its color class, then $\chi_{dom}^{\prime}(G-e) =\chi_{dom}^{\prime}(G)$. If $e$ is the only edge that dominates all adjacent color classes, then by removing $e$, some of these edges will not dominated or previous coloring with removing  $e$ is not a dominated edge coloring. Then we have to add new colors. Since $e$ is not a bridge of $G$, there is a path between $u$ and $v$ other than $e$. In this case, we need to add at least $deg(v)-2$ color. We can use two previous colors, the color of edge $e$ and one of the previous colors (by Remark \ref{aa} and Theorem \ref{path}). So $\chi_{dom}^{\prime}(G-e) \leq \chi_{dom}^{\prime}(G)+deg(v)-2$ and therefore we have the result.
\qed 
\begin{remark}
The bounds in Theorem \ref{deledge} are sharp. For the upper lower consider the graph in Figure \ref{de1}  and for the upper bound consider the graph in Figure\ref{de2}.
\end{remark}
\begin{figure}
	\begin{center}
		\includegraphics[width=0.6\textwidth]{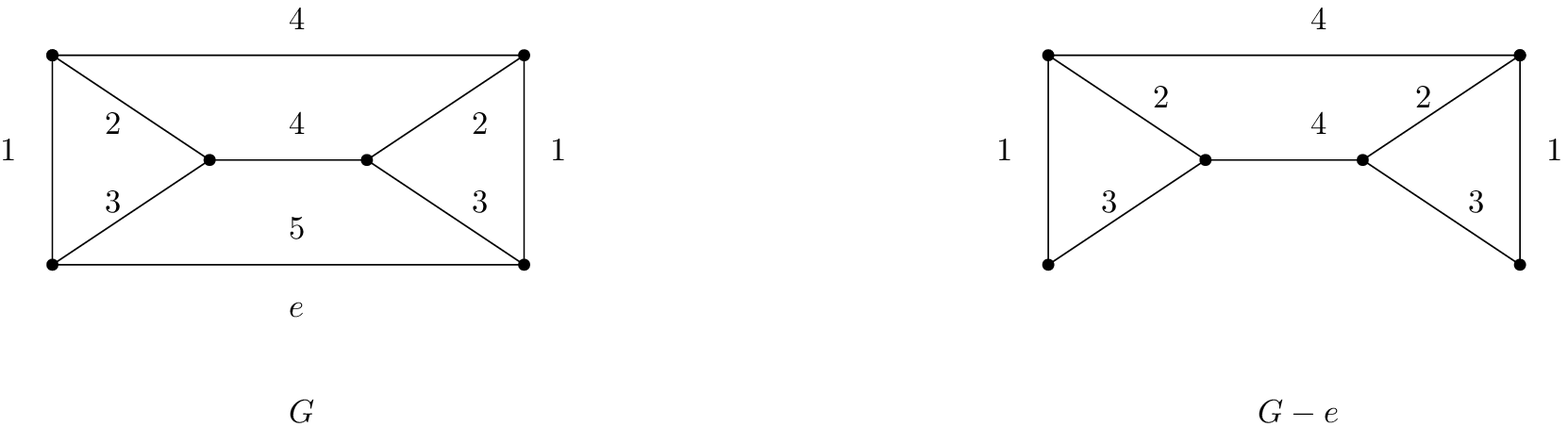}
		\caption{\label{de1} The graph $G$ in the Theorem \ref{deledge}.}
	\end{center}
\end{figure}
\begin{figure}
	\begin{center}
		\includegraphics[width=0.6\textwidth]{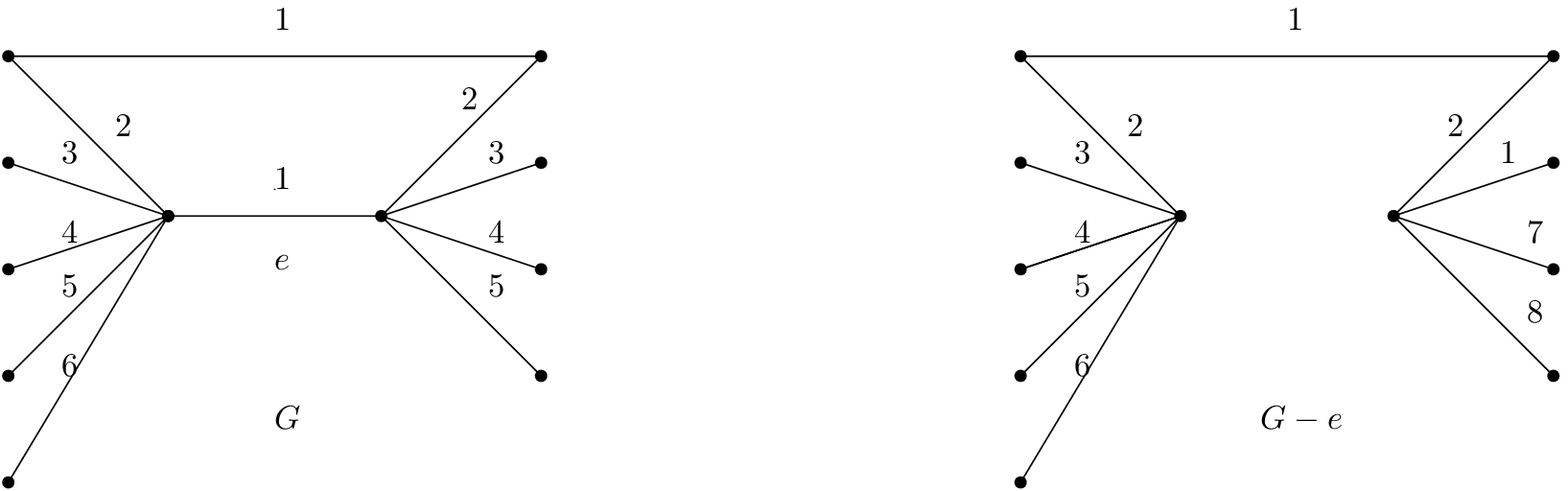}
		\caption{\label{de2} The graph $G$ in the Theorem \ref{deledge}.}
	\end{center}
\end{figure}
\begin{theorem}\label{delv}
 There is a connected graph $G$ and a vertex $v\in V(G)$ which is not a cut vertex of $G$ such that $\vert\chi_{dom}^{\prime}(G) - \chi_{dom}^{\prime}(G-e)\vert$ can be arbitrarily large.
 \end{theorem}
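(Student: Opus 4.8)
The plan is to produce, for every integer $p\ge 2$, a connected graph $G_p$ together with an edge $e$ that is not a bridge of $G_p$ and for which $\chi_{dom}^{\prime}(G_p)=p+1$ while $\chi_{dom}^{\prime}(G_p-e)=2p-1$; then $|\chi_{dom}^{\prime}(G_p)-\chi_{dom}^{\prime}(G_p-e)|=p-2$, which grows without bound. (As in Theorem~\ref{delvv}, and since the statement follows Theorem~\ref{deledge}, the object deleted is an edge $e$ that is not a bridge.) I would take $G_p$ to be the bi-star $B_{p,p}$ --- centres $u$ and $v$ joined by $e:=uv$, with $u$ adjacent to leaves $a_1,\dots,a_p$ and $v$ adjacent to leaves $b_1,\dots,b_p$ --- with one extra edge $a_1b_1$ added. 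The $4$-cycle $u\,a_1\,b_1\,v\,u$ shows that $e=uv$ is not a bridge of $G_p$.

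For $G_p$ itself, I would first exhibit the coloring that assigns color $i$ to the (non-adjacent) pair $\{ua_i,vb_i\}$ for $1\le i\le p$ and color $p+1$ to the (non-adjacent) pair $\{uv,a_1b_1\}$; one checks it is proper, that $uv$ dominates each class $\{ua_i,vb_i\}$, and that $ua_1$ dominates the class $\{uv,a_1b_1\}$, so $\chi_{dom}^{\prime}(G_p)\le p+1$. Since $\deg(u)=p+1$, Remark~\ref{zz} forces equality, $\chi_{dom}^{\prime}(G_p)=p+1$. For $G_p-e$, by Remark~\ref{aa} each color class has one or two edges, and a class of size two must consist of two non-adjacent edges possessing a common neighbouring edge (the edge dominating the class). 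A short case check shows that in $G_p-e$ the only such pairs are $\{ua_1,vb_1\}$ (common neighbour $a_1b_1$) and the pairs $\{a_1b_1,ua_i\}$, $\{a_1b_1,vb_j\}$ with $i,j\ge 2$ (common neighbour $ua_1$, resp.\ $vb_1$); since the edge $a_1b_1$ lies in only one class, at most two color classes can have size two. As $G_p-e$ has $2p+1$ edges, every dominated edge coloring uses at least $(2p+1)-2=2p-1$ colors, and the explicit coloring with classes $\{ua_1,vb_1\}$, $\{ua_2,a_1b_1\}$ and the remaining $2p-3$ edges as singletons (dominated by $a_1b_1$, by $ua_1$, and by $ua_1$ or $vb_1$ respectively) attains this. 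Hence $\chi_{dom}^{\prime}(G_p-e)=2p-1$, and the statement follows.

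The only step that is not a routine verification is the lower bound $\chi_{dom}^{\prime}(G_p-e)\ge 2p-1$: it rests on the claim that at most two color classes of $G_p-e$ have size two, which in turn needs the enumeration of all non-adjacent edge pairs of $G_p-e$ having a common neighbour. I expect this enumeration --- organized by whether a pair meets $\{ua_1,vb_1\}$ or the edge $a_1b_1$ --- to be the main, though still elementary, obstacle; once it is in place the theorem drops out. One could instead try to invoke the induced-path bound of Theorem~\ref{rem} or the inequality $\chi_{dom}^{\prime}\ge\gamma_t^{\prime}$, but in this example neither is tight enough to reach $2p-1$, so the pairing argument seems to be needed.
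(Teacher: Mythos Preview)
Your construction is correct: for $G_p$ the coloring and the degree bound give $\chi_{dom}^{\prime}(G_p)=p+1$, and your enumeration of admissible size-two classes in $G_p-e$ is complete, so at most two classes have size two and $\chi_{dom}^{\prime}(G_p-e)=2p-1$. This is essentially the paper's approach as well: the paper simply invokes the sharpness example of Theorem~\ref{deledge} (Figure~\ref{de2}) with $e=uv$ and $\deg(u),\deg(v)$ large, i.e.\ two high-degree hubs joined by $e$ so that deleting $e$ destroys the common neighbour of most pairs; your bi-star with the extra edge $a_1b_1$ is exactly this type of graph, and you have supplied the explicit computations that the paper leaves to the figure.
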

 \proof
 We consider the graph in Figure \ref{de2} with $e=uv$ such that $deg(u)$ and $deg(v)$ are  large enough.
 \qed
 
 \medskip
 
In a graph $G$, contraction of an edge $e$ with endpoints $u, v$ is the replacement of
$u$ and $v$ with a single vertex such that edges incident to the new vertex are the edges
other than $e$ that were incident with $u$ or $v$. The resulting graph $G/e$ has one less edge
than $G$. We denote this graph by $G/e$. We end this section with the following theorem
which gives bounds for $\chi_{dom}^{\prime}(G/e)$.
\begin{theorem}\label{edgecontractin}
Let $G$ be a connected graph and $e=uv\in E(G)$. Then we have:
\begin{equation*}
\chi_{dom}^{\prime}(G)-1\leq \chi_{dom}^{\prime}(G/e)\leq \chi_{dom}^{\prime}(G)+min\{ deg(u), deg(v)\}-1.
\end{equation*}
\end{theorem}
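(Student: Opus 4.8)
The plan is to establish the two inequalities separately, in each case by surgery on an optimal dominated edge coloring: for the right‑hand inequality I would contract an optimal coloring of $G$ to one of $G/e$, and for the left‑hand one I would lift an optimal coloring of $G/e$ back to one of $G$.

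\textbf{The right‑hand inequality.} Assume without loss of generality $deg(v)\le deg(u)$ and fix an optimal dominated edge coloring $c$ of $G$. Form $G/e$ by identifying $u$ and $v$ to a new vertex $w$. The colors of $c$, restricted to $E(G)\setminus\{e\}$, stay proper everywhere except possibly at $w$, where an edge formerly at $u$ and an edge formerly at $v$ might collide; I repair this by recoloring each of the $deg(v)-1$ edges incident to $v$ other than $e$ with its own brand‑new color, which costs exactly $deg(v)-1=\min\{deg(u),deg(v)\}-1$ extra colors. Properness is then immediate, and domination is checked in three cases: a brand‑new class is a single edge at $w$, and since $deg(v)\ge 2$ forces $deg(u)\ge 2$ there is a second edge at $w$ dominating it; an old class with a $c$‑dominator $\ne e$ keeps that dominator, because $N_{G/e}(f)\supseteq N_{G}(f)\setminus\{e\}$ for every edge $f\ne e$; and an old class dominated in $G$ only by $e$ contains at most one edge at $u$ and at most one at $v$, so after deleting $e$ and recoloring the $v$‑edge it shrinks to at most one edge at $w$, again dominated by a neighboring edge at $w$. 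The finitely many tiny exceptions where no such second edge exists (essentially $P_3$ and $K_2$) are checked directly, the bound being slack there.

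\textbf{The left‑hand inequality}, i.e.\ $\chi_{dom}^{\prime}(G)\le\chi_{dom}^{\prime}(G/e)+1$, is the delicate one. Starting from an optimal dominated edge coloring $c'$ of $G/e$, I would split $w$ back into $u$ and $v$ and restore $e=uv$ with one fresh color; this is proper because the edges at $w$ had pairwise distinct $c'$‑colors. Every color class consisting only of edges formerly incident to $w$ is now dominated by $e$, and every class whose $c'$‑dominator avoids $w$ retains that dominator; so the only classes that can fail are pairs $\{g,h\}$ in which $h$ was incident to $w$ (hence is now incident to $u$ or to $v$) while $g$ is incident to neither $u$ nor $v$. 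For one such pair the fix is clean: $g$ is not adjacent to $e$, so I recolor $g$ with the fresh color of $e$; then $e$ dominates the leftover singleton $\{h\}$, while any $c'$‑dominator $d$ of $\{g,h\}$ — which, since $\{g,h\}$ failed, must be one of the edges formerly at $w$, hence is now incident to the endpoint of $e$ not lying on $h$ and still to an endpoint of $g$ — dominates the new pair $\{e,g\}$.

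\textbf{The main obstacle} is precisely here: several such pairs can be broken at once, and one cannot dump all of their ``far'' edges into the single class of $e$ without violating the fact that a color is used at most twice (Remark \ref{aa}). Thus the repairs have to be coordinated: after spending the fresh color of $e$ on the first broken pair, each further broken pair $\{g,h\}$ must be repaired by recoloring $g$ into some \emph{existing} singleton class — naturally that of an edge incident to whichever of $u,v$ is not an endpoint of $h$, since such an edge is non‑adjacent to $g$ and shares with it the neighbor $d$ (the old $c'$‑dominator of $\{g,h\}$). Proving that a consistent choice of such landing classes always exists, that it keeps the coloring proper, and that no class grows past size two, is the technical heart of the argument; everything else is bookkeeping.
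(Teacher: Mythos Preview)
Your treatment of the right-hand inequality is exactly the paper's: contract $e$ and give each of the $\min\{\deg(u),\deg(v)\}-1$ surviving edges at the low-degree endpoint a fresh color. Your domination check is more thorough, but the method is the same.

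For the left-hand inequality you have gone further than the paper, not less far. The paper's entire argument is one line: take an optimal dominated edge coloring of $G/e$, reinstate $e$ with a new color, and assert that the result is a dominated edge coloring of $G$. You correctly suspect that this assertion can fail. Indeed, take $G=P_5$ with $e$ the middle edge: $G/e=P_4$ admits the $2$-coloring $(1,2,1)$, and lifting it to $(1,\,3,\,2,\,1)$ on $P_5$ leaves the class of color $1$ (the two pendant edges) with no common neighbor. This is precisely your ``broken pair'' phenomenon, and the paper neither notices it nor the multiplicity issue you isolate as the main obstacle.

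So the gap you describe is real, but it is a gap in the paper's proof as much as in yours; the paper simply does not see it. Your single-pair repair (moving the far edge $g$ into the fresh class of $e$, dominated by the old dominator $d$ now on the opposite side) is sound, and the coordinated repair for several broken pairs that you sketch --- matching each far edge to a singleton class at the appropriate endpoint of $e$ --- is exactly the missing substance. Completing that matching argument would give a proof strictly more rigorous than the one in the paper.
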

\proof
First we prove the left inequality. We give a dominated edge coloring to $G/e$, add $e$ and assign it a new color, say $i$. This is a dominated edge coloring of $G$. So we have $\chi_{dom}^{\prime}(G)\leq \chi_{dom}^{\prime}(G/e)+1$.\\
For the right inequality, we give a dominated edge coloring to $G$. Suppose that $min \{deg(u), deg(v)\}=deg(u)$. Now we make $G/e$ and change the color of adjacent edge of $e$ with endpoint $u$ to new colors. So we have the result.
\qed

\medskip
\begin{remark}
 The bounds in Theorem \ref{edgecontractin} are sharp. For the upper bound consider the graph bi-star $B_{p,q}$ and for the lower bound consider $C_5$ as $G$. Note that $\chi_{dom}^{\prime}(C_5)=3$ and $\chi_{dom}^{\prime}(C_4)=2$.
 \end{remark}

\section{Dominated edge coloring of $k$-subdivision of a graph}
The $k$-subdivision of $G$, denoted by $G^{\frac{1}{k}}$, is constructed by replacing each edge $v_iv_j$
of $G$ with a path of length $k$., say $P^{\{v_i ,v_j\}}$. These $k$-paths are called superedges, any new
vertex is an internal vertex, and is denoted by $x_l^{\{v_i,v_j\}}$
 if it belongs to the superedge
$P^{\{v_i ,v_j\}}, i < j$ with distance $l$ from the vertex $v_i$, where $l\in \{1, 2,..., k-1\}$. Note that
for $k = 1$, we have $G^{\frac{1}{1}} = G^1 = G$, and if the graph $G$ has $v$ vertices and $e$ edges, then
the graph $G^{\frac{1}{k}}$ has $v + (k - 1)e$ vertices and $ke$ edges. In this section we study dominated edge coloring of $k$-subdivision of a graph (\cite{subnima}). In Particular, we obtain some bounds for $\chi_{dom}^{\prime}(G^{\frac{1}{k}})$ and prove that for any $k\geq2$, $\chi_{dom}^{\prime}(G^{\frac{1}{k}})\leq\chi_{dom}^{\prime}(G^{\frac{1}{k+1}})$.
\begin{theorem}
If $G$ is a graph of size $m$, then $\chi_{dom}^{\prime}(G^{\frac{1}{k}})\geq m$, for $k\geq 3$.
\end{theorem}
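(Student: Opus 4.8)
The plan is to obtain the bound as a direct consequence of Remark~\ref{aa} together with the edge count of the $k$-subdivision recorded at the beginning of this section. Recall that if $G$ has $m$ edges, then each edge of $G$ is replaced by a superedge of length $k$, so $G^{\frac1k}$ has exactly $km$ edges. Also, $G^{\frac1k}$ does admit a dominated edge coloring for $k\geq 3$: every edge of $G^{\frac1k}$ lies on a superedge, which is a path with at least three edges and hence has an adjacent edge, so even the coloring assigning all edges distinct colors is a dominated edge coloring.

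Now I would fix an arbitrary dominated edge coloring of $G^{\frac1k}$ using $\chi_{dom}^{\prime}(G^{\frac1k})$ colors. By Remark~\ref{aa} each color class has size $1$ or $2$, so at most $2\,\chi_{dom}^{\prime}(G^{\frac1k})$ edges are colored; since all $km$ edges are colored, $\chi_{dom}^{\prime}(G^{\frac1k})\geq \lceil km/2\rceil$. Because $k\geq 3$, this gives $\chi_{dom}^{\prime}(G^{\frac1k})\geq \lceil 3m/2\rceil \geq m$, as claimed. (Equivalently: if only $m-1$ colors were available, one could color at most $2(m-1)<km$ edges, a contradiction.)

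There is no real obstacle here — the argument is just a counting bound, and the only two points to verify carefully are that $G^{\frac1k}$ has $km$ edges and that $\lceil 3m/2\rceil\geq m$ for all $m\geq 0$. I would also note in passing that the same reasoning in fact yields the stronger estimate $\chi_{dom}^{\prime}(G^{\frac1k})\geq \lceil km/2\rceil$; presumably the weaker form $\geq m$ is stated because it is the version needed later, e.g.\ toward the monotonicity bound $\chi_{dom}^{\prime}(G^{\frac1k})\leq\chi_{dom}^{\prime}(G^{\frac1{k+1}})$ announced at the start of this section.
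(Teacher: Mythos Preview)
Your argument is correct and in fact cleaner than the paper's own. You simply apply Remark~\ref{aa} to the $km$ edges of $G^{\frac1k}$ to obtain $\chi_{dom}^{\prime}(G^{\frac1k})\ge\lceil km/2\rceil\ge m$; this is a pure counting argument and, as you note, it even yields the stronger estimate $\lceil km/2\rceil$ for free. The paper proceeds differently and more locally: for $k=3$ it looks at the middle edge $x_1x_2$ of each superedge and argues that one of its two neighbours must carry a colour that cannot appear in any other superedge, so that the $m$ superedges force $m$ distinct colours. Your route avoids having to analyse which colours can be shared across superedges and works uniformly for all $k\ge 3$ at once, while the paper's structural argument, if pushed through for general $k$, would say a bit more about \emph{where} the distinct colours are forced to occur. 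The side remark that a dominated edge colouring of $G^{\frac1k}$ actually exists (every edge lies on a path of length $k\ge3$ and hence has a neighbour) is a nice point the paper leaves implicit.
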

\begin{proof}
For $k=3$, in any superedge $P^{\{v,w\}}$ such as $\{v, x_1^{\{v,w\}}, x_2^{\{v,w\}}, w\}$, the edge $x_1^{\{v,w\}}x_2^{\{v,w\}}$ need to use a new color in at least of its adjacent edges and we cannot use this color in any other superedges. So we have the result.
 \qed
 \end{proof}

 \begin{theorem}\label{dfrac}
 If $G$ is a connected graph of size $m$ and $k\geq 2$, then
 \begin{equation*}
\chi_{dom}^{\prime}(P_{k+1})\leq \chi_{dom}^{\prime}(G^{\frac{1}{k}}) \leq m\chi_{dom}^{\prime}(P_{k+1}).
\end{equation*}
 \end{theorem}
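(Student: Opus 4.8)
The plan is to prove the two inequalities separately. For the lower bound $\chi_{dom}^{\prime}(P_{k+1})\leq\chi_{dom}^{\prime}(G^{\frac{1}{k}})$, I would argue by restriction: since $G$ is connected and has at least one edge, pick any edge $v_iv_j$ of $G$ and consider the superedge $P^{\{v_i,v_j\}}$, which is a copy of $P_{k+1}$ sitting inside $G^{\frac{1}{k}}$. Starting from an optimal dominated edge coloring of $G^{\frac{1}{k}}$, I would like to say that the colors appearing on this superedge, together with their dominating edges, yield a dominated edge coloring of $P_{k+1}$ on its own; the subtlety is that a color class of $G^{\frac{1}{k}}$ restricted to the superedge may lose its dominating edge if that edge lies outside the superedge. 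I expect this to be the main obstacle in this direction, and the way around it is to choose the superedge emanating from a vertex $v_i$ that is a \emph{leaf-like} attachment or, more robustly, to observe that the three innermost edges of a long superedge (those at distance $\geq 2$ from both endpoints, which exist when $k$ is large) can only be dominated by edges of the same superedge, so one can peel off a sub-path and induct; for small $k$ ($k=2,3$) one checks directly that $\chi_{dom}^{\prime}(P_{k+1})$ is tiny and the bound is immediate from Remarks \ref{zz} and \ref{aa}.

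For the upper bound $\chi_{dom}^{\prime}(G^{\frac{1}{k}})\leq m\,\chi_{dom}^{\prime}(P_{k+1})$, the natural construction is to color each of the $m$ superedges independently: each superedge is a path $P_{k+1}$, so give it an optimal dominated edge coloring using a \emph{fresh} palette of $\chi_{dom}^{\prime}(P_{k+1})$ colors disjoint from the palettes of all other superedges. The total number of colors used is then exactly $m\,\chi_{dom}^{\prime}(P_{k+1})$. I would then verify the two requirements of a dominated edge coloring. Properness is automatic across different superedges since their palettes are disjoint, and within a superedge because we started from a proper coloring of $P_{k+1}$; the only edges that need checking are those meeting at an original vertex $v_i$ of degree $\geq 2$, but these belong to distinct superedges with disjoint palettes, so no conflict arises. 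The domination condition also transfers directly: each color class lives entirely inside one superedge, and the edge that dominated it inside $P_{k+1}$ still dominates it inside $G^{\frac{1}{k}}$ (adding more vertices and edges only enlarges neighborhoods, never shrinks color classes). Hence the coloring is a valid dominated edge coloring of $G^{\frac{1}{k}}$ and the upper bound follows.

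Assembling the two bounds gives the claimed chain of inequalities. The only delicate point is the lower-bound argument, where one must be careful that the dominating edge of a restricted color class survives the restriction to a single superedge; handling this—either by an induction that strips off interior edges of a superedge, or by invoking the earlier structural results on paths (Theorem \ref{path}) and the "each color used at most twice" principle (Remark \ref{aa})—is where the real work lies, while the upper bound is a clean disjoint-palette construction.
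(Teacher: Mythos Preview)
Your approach matches the paper's: the upper bound is exactly the disjoint-palette construction you describe, and the lower bound is the restriction of an optimal dominated edge coloring of $G^{\frac{1}{k}}$ to a single superedge $P^{\{u,v\}}$. The paper simply asserts that this restriction is a dominated edge coloring of $P_{k+1}$ (after disposing of the trivial case where $G$ is itself a path), without pausing over the subtlety you flag.

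That subtlety, however, dissolves more easily than your proposed workarounds suggest. By Remark~\ref{aa} each color class has size at most two, so a restricted class is either a singleton $\{e_i\}$, which is dominated by its neighbor $e_{i\pm 1}$ inside the superedge, or a pair $\{e_i,e_j\}$. In the latter case the original dominating edge is adjacent to both, forcing $|i-j|=2$ and placing the dominator $e_{i+1}$ inside the superedge (the only alternative would be $\{e_1,e_k\}$ dominated from outside, but for $k\ge 2$ no edge of $G^{\frac{1}{k}}$ is simultaneously adjacent to $e_1$ and $e_k$). Hence the restriction is automatically a valid dominated edge coloring of $P_{k+1}$, and no induction or separate treatment of small $k$ is needed.
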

\begin{proof}
 First we prove the right inequality. Suppose that $e=uu_1$ be an arbitrary edge of $G$. This edge is replaced with the super edge $P^{\{u,u_1\}}$ in $G^{\frac{1}{k}}$, with vertices $\{u, x_1^{\{u,u_1\}}, ... , x_{k-1}^{\{u,u_1\}}, u_1\}$. We color this superedge with $\chi_{dom}^{\prime}(P_{k+1})$ colors as a dominated edge coloring of $P_{k+1}$. We do this for all superedges. Thus we need at most $m\chi_{dom}^{\prime}(P_{k+1})$ new colors for a dominated edge coloring of $G^{\frac{1}{k}}$.\\
 For the left inequality, if $G$ is a path the result is true. So we suppose that $G$ is a connected graph which is not a path. Let $c^{\prime}$ be a dominated edge coloring of $G^{\frac{1}{k}}$. The restriction of $c^{\prime}$ to edges of $P^{\{u,v\}}$ is a dominated edge coloring and so we have the result.
 \qed
 \end{proof}
 \begin{remark}
 The lower bound of Theorem \ref{dfrac} is sharp for $P_2$ and by the following Theorem we show that the upper bound of this Theorem is sharp for $G=K_{1,n}$ and $k\geq 3$.
 \end{remark}
 \begin{theorem}
 For $n\geq 3$ and $k\geq 3$, $\chi_{dom}^{\prime}(K_{1,n}^{\frac{1}{k}})=n\chi_{dom}^{\prime}(P_{k+1})$.
 \end{theorem}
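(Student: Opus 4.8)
The plan is to establish the two bounds separately; the upper bound comes for free from Theorem \ref{dfrac}, and all the work lies in the matching lower bound.

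Since $K_{1,n}$ has exactly $n$ edges, the inequality $\chi_{dom}^{\prime}(K_{1,n}^{\frac{1}{k}})\le n\,\chi_{dom}^{\prime}(P_{k+1})$ is precisely the right-hand estimate of Theorem \ref{dfrac} applied with $G=K_{1,n}$ and $m=n$. For the lower bound I would work with the superedge structure. Let $v_0$ be the centre of the star and $L_1,\dots,L_n$ the $n$ superedges that replace the edges of $K_{1,n}$; each $L_i$ is a path on $k$ edges, i.e.\ a copy of $P_{k+1}$, and the $L_i$ pairwise meet only at $v_0$. The first step is to check that, for any dominated edge coloring $c'$ of $K_{1,n}^{\frac{1}{k}}$, the restriction of $c'$ to a leg $L_i$ is a dominated edge coloring of $P_{k+1}$: properness is inherited, and for the domination requirement one uses $k\ge 3$ — every edge of $L_i$ at distance $\ge 2$ from $v_0$ has all of its neighbours inside $L_i$, and the two edges of $L_i$ nearest $v_0$ are dominated within $L_i$ exactly as in the bare path, so inside $L_i$ every colour class is dominated by an edge of $L_i$. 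Hence $L_i$ uses at least $\chi_{dom}^{\prime}(P_{k+1})$ colours.

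The heart of the argument — and the step I expect to be the obstacle — is to show that the $n$ colour sets of the legs are essentially pairwise disjoint, so that overall at least $n\,\chi_{dom}^{\prime}(P_{k+1})$ colours occur. If one colour were used on edges $e\in L_i$ and $e'\in L_j$ with $i\ne j$, the common dominating edge $f$ would be adjacent to both, and since $L_i\cap L_j=\{v_0\}$ this forces $f$ to be incident to $v_0$. The subtle point is that an edge incident to $v_0$ is genuinely adjacent to edges of two distinct legs (to the centre edge of every other leg, and to the second edge of its own leg), so one cannot simply assert "no colour is shared"; the only surviving possibility is a colour carried by the centre edge of $L_i$ together with the second edge of $L_j$, dominated by the centre edge of $L_j$.

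To neutralise this, I would localise away from $v_0$: restrict $c'$ to the $k-1$ edges of each $L_i$ not incident to $v_0$. These sub-paths are copies of $P_k$, they are pairwise vertex-disjoint, and by the adjacency analysis above no colour is shared between two of them, so $K_{1,n}^{\frac{1}{k}}$ already uses at least $n\,\chi_{dom}^{\prime}(P_k)$ colours on them alone. Combining this with the fact that the $n$ centre edges carry pairwise distinct colours, and invoking the exact values of $\chi_{dom}^{\prime}(P_{k+1})$ and $\chi_{dom}^{\prime}(P_k)$ from Theorem \ref{path} together with the structure of optimal path colorings — by Remark \ref{aa} the colour classes of an optimal path colouring form a collection of "distance-two pairs" plus a controlled number of singletons — one argues that each leg either already forces an extra colour on its $P_k$-part or else contributes one genuinely new colour through its centre edge. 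Balancing these contributions gives $\chi_{dom}^{\prime}(K_{1,n}^{\frac{1}{k}})\ge n\,\chi_{dom}^{\prime}(P_{k+1})$, which together with the upper bound proves the theorem. The non-routine part is exactly this bookkeeping near $v_0$: the legs are not colour-disjoint on the nose, and turning the balancing argument into a rigorous proof is where the real content sits.
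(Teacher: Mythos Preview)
Your plan is considerably more careful than the paper's own argument. The paper's entire proof reads: colour each leg with an optimal $P_{k+1}$-colouring (this gives the upper bound), and then ``because the colours adjacent to the centre were used twice, by Remark \ref{aa}, we cannot use colours of one path to another path.'' That sentence is about the particular colouring just constructed, not about an arbitrary dominated edge colouring, so as a lower-bound argument it is at best a sketch. In particular, the paper never confronts the phenomenon you correctly isolate: in an arbitrary colouring the centre edge of one leg and the second edge of another leg \emph{can} share a colour (dominated by the latter leg's centre edge), so the legs are not colour-disjoint on the nose.

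Your outline is the right shape for a genuine proof, and the two intermediate claims --- that the restriction of any dominated edge colouring to a leg is a dominated edge colouring of $P_{k+1}$, and that the colour sets of the non-centre portions of distinct legs are pairwise disjoint --- are both correct. The gap is exactly where you say it is: the ``balancing'' near $v_0$. From disjointness away from the centre you get $\chi_{dom}^{\prime}(K_{1,n}^{1/k})\ge n\,\chi_{dom}^{\prime}(P_k)$, and you know the $n$ centre edges carry $n$ distinct colours, but each of those $n$ colours may already appear on a non-centre edge of another leg, so without further work the two contributions do not simply add. To close the argument you would need a case analysis on $k\bmod 4$ (using the exact values in Theorem \ref{path}) showing that whenever $\chi_{dom}^{\prime}(P_{k+1})=\chi_{dom}^{\prime}(P_k)+1$, the centre edge of each leg is forced to contribute a colour not seen on that leg's $P_k$-part. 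That step is not routine and is missing from both your sketch and the paper.
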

 \begin{proof}
 Color all path connected to the center vertex with $\chi_{dom}^{\prime}(P_{k+1})$. This is a dominated edge coloring. Because the colors adjacent to the center were used twice, by the Remark \ref{aa}, so we cannot use colors of one path to another path. Therefore we have the result.\qed
 \end{proof}
  \begin{theorem}
 If $G$ is a connected graph and $k \equiv 0 (mod\, 4)$, then $\chi_{dom}^{\prime}(G^{\frac{1}{k}})=m \chi_{dom}^{\prime}(P_{k+1})$.
 \end{theorem}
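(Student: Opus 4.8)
The upper bound $\chi_{dom}^{\prime}(G^{\frac1k})\le m\,\chi_{dom}^{\prime}(P_{k+1})$ is exactly Theorem \ref{dfrac}, so the plan is to establish the reverse inequality $\chi_{dom}^{\prime}(G^{\frac1k})\ge m\,\chi_{dom}^{\prime}(P_{k+1})$. I would fix an arbitrary dominated edge coloring $c$ of $G^{\frac1k}$ and show that the $m$ superedges use pairwise essentially disjoint color sets, each of size at least $\chi_{dom}^{\prime}(P_{k+1})$. The first ingredient is already implicit in the proof of the left inequality of Theorem \ref{dfrac}: for a superedge $S=P^{\{u,w\}}$ with edges $f_1,\dots,f_k$ listed from $u$ to $w$, the restriction $c|_S$ is a dominated edge coloring of $S\cong P_{k+1}$ — the interior edges $f_2,\dots,f_{k-1}$ have all of their neighbours on $S$, so their restricted classes are dominated inside $S$, and for $f_1,f_k$ the restricted class is a singleton dominated by $f_2$, resp.\ $f_{k-1}$ — so $S$ carries at least $\chi_{dom}^{\prime}(P_{k+1})$ colors.

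Next I would analyze when a color $\gamma$ can appear on two different superedges $S$ and $T$. By Remark \ref{aa} the class of $\gamma$ is a pair $\{e,e'\}$ with $e\in S$, $e'\in T$, dominated by an edge $g$ adjacent to both. Since every edge whose two endpoints are internal subdivision vertices has all of its neighbours on a single superedge, a case check on the location of $g$ forces the following picture: $S$ and $T$ share a branch vertex $v$ and, up to swapping $S$ and $T$, the edge $e$ is the edge of $S$ at $v$ while $e'$ is the second edge of $T$ counted from $v$, with $g$ the edge of $T$ at $v$ (the case $g\notin S\cup T$ is impossible since it would make $e$ and $e'$ adjacent at $v$). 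In particular such a $\gamma$ meets $S$ in a single edge, hence forms a singleton class of $c|_S$, and likewise of $c|_T$.

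To close the argument I would use $k\equiv0\pmod4$. A dominated edge coloring of $P_{k+1}$ with $a$ singleton and $b$ doubleton classes satisfies $a+2b=k$ and uses $(k+a)/2$ colors, so each additional singleton costs $\tfrac12$ of a color; since $4\mid k$, Theorem \ref{path} pins down the minimum-color colorings of $P_{k+1}$ tightly enough that a superedge $S$ whose restriction $c|_S$ has $s(S)$ singletons arising from shared colors must in fact use at least $\chi_{dom}^{\prime}(P_{k+1})+\lceil s(S)/2\rceil$ colors. Each shared color adds $1$ to exactly two of the numbers $s(S)$, so $\sum_S s(S)=2q$ where $q$ is the number of shared colors, and hence $\chi_{dom}^{\prime}(G^{\frac1k})\ge\sum_S(\text{colors on }S)-q\ge m\,\chi_{dom}^{\prime}(P_{k+1})+\sum_S\tfrac{s(S)}2-q=m\,\chi_{dom}^{\prime}(P_{k+1})$.

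The main obstacle is this last step: one must make precise and verify that, for $k\equiv0\pmod4$, the minimum-color dominated edge colorings of $P_{k+1}$ are rigid enough that $s(S)$ shared-color singletons genuinely force $\lceil s(S)/2\rceil$ extra colors on $S$, and one must rule out that the symmetric form of the configuration in the second step, or shared colors clustered at a branch vertex of degree $\ge3$, allow this accounting to be circumvented. By contrast, the per-superedge lower bound and the structural description of shared colors are routine case analyses whose only input is $k\ge4$.
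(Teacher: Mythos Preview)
Your argument is correct, but it is far more elaborate than necessary, and the paper's proof is essentially a one-liner by comparison. The point you are overlooking is that when $k\equiv 0\pmod 4$ one has $\chi_{dom}^{\prime}(P_{k+1})=k/2$: every color class in an optimal dominated edge coloring of $P_{k+1}$ is a doubleton. Hence the target lower bound $m\,\chi_{dom}^{\prime}(P_{k+1})$ is simply $mk/2$. Since $G^{\frac{1}{k}}$ has exactly $mk$ edges, Remark~\ref{aa} applied globally already gives $\chi_{dom}^{\prime}(G^{\frac{1}{k}})\ge \lceil mk/2\rceil = mk/2$, and that is the whole lower-bound argument in the paper.

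Your per-superedge accounting is not wrong, but when unwound it reproduces precisely this global edge count: in your notation,
\[
\sum_S(\text{colors on }S)-q \;\ge\; \sum_S\Bigl(\tfrac{k}{2}+\tfrac{s(S)}{2}\Bigr)-q \;=\; \tfrac{mk}{2}+q-q \;=\; \tfrac{mk}{2}.
\]
In particular, the step you flag as the ``main obstacle'' is no obstacle at all: the inequality $(\text{colors on }S)\ge k/2+s(S)/2$ is pure counting (a coloring of the $k$ edges of $S$ with $a$ singleton and $b$ doubleton classes uses $a+b=(k+a)/2$ colors, and $a\ge s(S)$), and needs neither any rigidity of extremal colorings of $P_{k+1}$ nor even that $c|_S$ be a \emph{dominated} edge coloring. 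Likewise, your structural description of where shared colors can sit near branch vertices is entirely unnecessary for the proof; it plays no role in the counting.
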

 \proof
 By Remark \ref{aa} each color can be used at most twice. So we color each superedge with $\chi_{dom}^{\prime}(P_{k+1})$ colors. Therefore $\chi_{dom}^{\prime}(G^{\frac{1}{k}})=m \chi_{dom}^{\prime}(P_{k+1})$.\qed

 \begin{theorem}
 If $G$ is  a graph of size  $m$  and $k \not\equiv 0 (mod\, 4)$ with $k\geq 5$, then 
 \begin{enumerate}
 \item[(i)] If $k\equiv 1 (mod\, 4)$, then $\chi_{dom}^{\prime}(G^{\frac{1}{k}})\geq m\chi_{dom}^{\prime}(P_k)$.
  \item[(ii)] If $k\equiv 2 (mod\, 4)$, then $\chi_{dom}^{\prime}(G^{\frac{1}{k}})\geq m\chi_{dom}^{\prime}(P_{k-1})$.
   \item[(iii)] If $k\equiv  (mod\, 4)$, then $\chi_{dom}^{\prime}(G^{\frac{1}{k}})\geq m\chi_{dom}^{\prime}(P_k)$.
 \end{enumerate}
 \end{theorem}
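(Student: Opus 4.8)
The plan is to localize colours: I will show that a long interior stretch of each superedge must carry colours that cannot appear on any other superedge, so that these colour sets are pairwise disjoint and each is already large enough to force the stated bound.

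Fix a dominated edge colouring $c$ of $G^{\frac{1}{k}}$ with $\chi_{dom}^{\prime}(G^{\frac{1}{k}})$ colours, and fix a superedge $P$; write its vertices as $u=w_0,w_1,\dots,w_{k-1},w_k=v$, where $w_1,\dots,w_{k-1}$ are the internal vertices, and put $f_\ell=w_{\ell-1}w_\ell$ for $1\le\ell\le k$. The basic observation is that for $2\le\ell\le k-1$ the edge $f_\ell$ has both endpoints among $w_1,\dots,w_{k-1}$; each such vertex has degree $2$ in $G^{\frac{1}{k}}$ and lies on no superedge other than $P$, so every edge adjacent to $f_\ell$ also lies on $P$, indeed in $\{f_{\ell-1},f_{\ell+1}\}$. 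Let $A_P$ be the set of colours assigned by $c$ to $f_2,\dots,f_{k-1}$. Suppose a colour of $A_P$ also occurred on an edge of another superedge $Q$; by Remark \ref{aa} its colour class has exactly two edges, one of them some $f_a$ with $2\le a\le k-1$, and its dominating edge $h$ is adjacent to $f_a$, hence $h\in\{f_{a-1},f_{a+1}\}\subseteq E(P)$. If $3\le a\le k-2$ then $f_{a-1}$ and $f_{a+1}$ also have only internal endpoints, so $h$ cannot be adjacent to an edge of $Q$; and if $a\in\{2,k-1\}$ the only member of $\{f_{a-1},f_{a+1}\}$ that reaches outside $P$ is the boundary edge $f_1$ or $f_k$, through which $h$ can only be adjacent to a boundary edge of $Q$, never to one of the interior edges $f_2,\dots,f_{k-1}$ of $Q$. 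In every case this is a contradiction, so $A_P\cap A_Q=\emptyset$ whenever $P\ne Q$.

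Next I will check that the restriction of $c$ to the sub-path $f_2f_3\cdots f_{k-1}$, a copy of $P_{k-1}$, is itself a dominated edge colouring of $P_{k-1}$. It is proper, and for a colour of $A_P$ its class inside the sub-path consists of one or two edges of $\{f_2,\dots,f_{k-1}\}$, whose dominating edge in $c$ is adjacent to those edges and hence, as above, lies on $P$ and in fact on $f_2f_3\cdots f_{k-1}$; the only classes whose original dominator could fall outside are those meeting $f_2$ or $f_{k-1}$, and these are dominated inside the sub-path by $f_3$, respectively $f_{k-2}$. Hence $|A_P|\ge\chi_{dom}^{\prime}(P_{k-1})$, and summing over the $m$ superedges,
\begin{equation*}
\chi_{dom}^{\prime}\big(G^{\frac{1}{k}}\big)\ \ge\ \sum_{P}|A_P|\ =\ m\,\chi_{dom}^{\prime}(P_{k-1}).
\end{equation*}
This is exactly the bound in (ii) for $k\equiv 2\pmod 4$, and, since $\chi_{dom}^{\prime}(P_{k-1})=\chi_{dom}^{\prime}(P_k)$ when $k\equiv 1\pmod 4$ by Theorem \ref{path}, it is the bound in (i).

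For $k\equiv 3\pmod 4$ the required bound $m\,\chi_{dom}^{\prime}(P_k)$ exceeds $m\,\chi_{dom}^{\prime}(P_{k-1})$ by exactly $m$, so one extra colour per superedge must be produced. The plan is to lengthen the private stretch by one boundary edge, say $f_1$: the restriction of $c$ to $f_1f_2\cdots f_{k-1}\cong P_k$ is still a dominated edge colouring, so at least $\chi_{dom}^{\prime}(P_k)$ colours occur on those edges, and it then remains to show that the additional boundary colours collected in this way are pairwise distinct over the $m$ superedges. The facts that make this plausible are that $G^{\frac{1}{k}}$ contains no edge joining two original vertices (so two boundary colours sitting at distinct original vertices cannot share a dominating edge), that properness separates the boundary colours at a single original vertex, and that a colour on $f_1^{P}$ recycled onto the edge $f_2^{Q}$ of a neighbouring superedge is already counted inside $A_Q$. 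Making this accounting precise near the original vertices — and verifying that the hypothesis $k\equiv 3\pmod 4$ really prevents an optimal colouring of the superedge from saving that last colour — is the step I expect to be the main obstacle; the cases $k\equiv 1,2\pmod 4$ settled above are by comparison routine.
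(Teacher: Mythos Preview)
Your argument for parts (i) and (ii) is correct and considerably more careful than the paper's own proof, which amounts to the bare assertion that ``in each superedge we need at least $\chi_{dom}^{\prime}(P_k)$ (resp.\ $\chi_{dom}^{\prime}(P_{k-1})$) colours'' with no justification for why colours cannot be shared across superedges. Your localisation via the interior colour sets $A_P$, together with the check that the restriction of $c$ to the $(k-2)$-edge interior sub-path is itself a dominated edge colouring of $P_{k-1}$, is exactly what is needed to make that assertion rigorous. One minor wording point: when arguing $A_P\cap A_Q=\emptyset$ you suppose the colour occurs ``on an edge of another superedge $Q$'', but your contradiction only excludes \emph{interior} edges of $Q$; since $A_Q$ contains only interior-edge colours this is precisely what is required, so the conclusion is unaffected.

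For part (iii) you are right to isolate the boundary accounting as the sticking point --- and in fact the obstacle is real, because the inequality as stated is false. Take $G=C_3$ and $k=7$, so that $G^{1/7}=C_{21}$. By Theorem~\ref{path} one has $\chi_{dom}^{\prime}(C_{21})=\chi_{dom}^{\prime}(P_{22})=\lfloor 21/2\rfloor+1=11$, while the claimed lower bound is $m\,\chi_{dom}^{\prime}(P_7)=3\cdot 4=12$. Thus $\chi_{dom}^{\prime}(G^{1/7})=11<12$, contradicting (iii). The paper's ``proof'' of (iii) is nothing more than the unsupported sentence ``if $k\equiv 3\pmod 4$, we need at least $\chi_{dom}^{\prime}(P_k)$ colours in each superedge'', which this example refutes; your uniform bound $m\,\chi_{dom}^{\prime}(P_{k-1})$ is the correct one in this case, and your suspicion that the extra colour per superedge cannot always be extracted was well founded.
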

 \proof
 By Remark \ref{aa}, in each superedge, each color can be used at most twice. So we can use some of colors that used once. If $k \equiv 1 (mod\, 4)$, we need at least $\chi_{dom}^{\prime}(P_k)$ color in each superedges. If $k \equiv 2 (mod\, 4)$, we need at least $\chi_{dom}^{\prime}(P_{k-1})$ color in each superedges and if $k \equiv 3 (mod\, 4)$, we need at least $\chi_{dom}^{\prime}(P_k)$ color in each superedges. Therefore, we have the result.\qed
 
 \begin{theorem}\label{zzz}
 For any $k\geq 3$, $\chi_{dom}^{\prime}(G^{\frac{1}{k}})\leq \chi_{dom}^{\prime}(G^{\frac{1}{k+1}})$.
 \end{theorem}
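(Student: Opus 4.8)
The plan is to transform a dominated edge coloring $c'$ of $G^{\frac{1}{k+1}}$ that uses $\chi_{dom}^{\prime}(G^{\frac{1}{k+1}})$ colors into a dominated edge coloring of $G^{\frac{1}{k}}$ with no new colors. The key structural fact is that $G^{\frac{1}{k}}$ is obtained from $G^{\frac{1}{k+1}}$ by contracting exactly one edge in every superedge: write a superedge as $P^{\{u,v\}}$ with consecutive vertices $u=p_0,p_1,\dots,p_{k+1}=v$ and edges $e_i=p_{i-1}p_i$; contracting a single $e_j$ shortens this superedge to length $k$, and since distinct superedges share only vertices of $G$, performing one such contraction per superedge produces exactly $G^{\frac{1}{k}}$. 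If we keep $c'$ on every surviving edge, the resulting edge coloring of $G^{\frac{1}{k}}$ is proper provided that, whenever we contract an internal edge $e_j$ with $2\le j\le k$, the two edges that become newly adjacent — namely $e_{j-1}$ and $e_{j+1}$ — received different colors under $c'$; no other adjacency changes, and in particular the edges at the branch vertices $u,v$ are untouched.

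First I would locate, inside each superedge, a suitable $j$. If there were no $j\in\{2,\dots,k\}$ with $c'(e_{j-1})\ne c'(e_{j+1})$, then $c'(e_1)=c'(e_3)=c'(e_5)=\cdots$; for $k\ge 4$ this forces at least three odd-indexed edges onto a single color, contradicting Remark~\ref{aa}. Hence for $k\ge 4$ a safe internal edge always exists. The only remaining situation is $k=3$ with the superedge colored alternately $\alpha\beta\alpha\beta$ (with $\alpha\ne\beta$); there I would instead contract the boundary edge $e_1$. This stays proper because, by Remark~\ref{aa}, each of $\alpha$ and $\beta$ already occurs twice inside this superedge and therefore nowhere else, so the color $\beta$ that slides onto the new edge at $u$ conflicts with none of the edges already present at $u$ (and two distinct exceptional superedges meeting at $u$ use disjoint color pairs).

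The part I expect to be the real obstacle is checking that the inherited coloring is still \emph{dominated}, since the contracted edge might have been the unique dominator of some color class. The clean point is that for $2\le j\le k$ the vertices $p_{j-1},p_j$ have degree $2$, so $N(e_j)=\{e_{j-1},e_{j+1}\}$; hence any class dominated by $e_j$ lies inside $\{e_{j-1},e_{j+1}\}$, and because we chose $c'(e_{j-1})\ne c'(e_{j+1})$ such a class is a singleton $\{e_{j-1}\}$ or $\{e_{j+1}\}$ — which is exactly the class rescued by the new adjacency $e_{j-1}\sim e_{j+1}$ that the contraction creates. It then remains to check the class of $e_j$ itself (if it is a pair $\{e_j,e^{\ast}\}$, the survivor $e^{\ast}$ keeps a dominator, and that dominator survives the contraction, possibly with a renamed endpoint) and the classes dominated by $e_{j-1}$ or $e_{j+1}$ (these edges only gain a neighbor, or lose only $e_j$, whose class is already accounted for). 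For the $k=3$ exception one does the analogous bookkeeping using $N(e_1)=\{e_2\}\cup\{\text{other first-edges at }u\}$, all of whose colors are distinct from the doubled colors $\alpha,\beta$. Carrying out these routine but fiddly verifications in all superedges simultaneously yields a dominated edge coloring of $G^{\frac{1}{k}}$ with at most $\chi_{dom}^{\prime}(G^{\frac{1}{k+1}})$ colors, which is the claim.
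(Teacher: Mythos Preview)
Your argument is correct and rests on the same core idea as the paper: obtain $G^{1/k}$ from $G^{1/(k+1)}$ by contracting one interior edge in every superedge and keep (essentially) the old colors. The paper contracts an arbitrary interior edge and, when its two neighbours carry the \emph{same} colour $j$, repairs properness by recolouring one of them with the freed colour $i$ of the contracted edge. You instead use Remark~\ref{aa} to \emph{locate} an interior edge whose neighbours already have distinct colours (which always exists once $k\ge 4$), thereby avoiding any recolouring; the residual $k=3$ pattern $\alpha\beta\alpha\beta$ you handle by contracting the boundary edge $e_1$ and invoking Remark~\ref{aa} again to rule out conflicts at the branch vertex. Your route trades the paper's uniform ``contract then fix'' step for a cleaner ``choose well then contract'' step plus one exceptional case; in return you are able to give the explicit verification that every colour class remains dominated, a point the paper leaves implicit.
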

 \begin{proof}
 First we give a dominated edge coloring of $G^{\frac{1}{k}}$. Let $P^{\{u,v\}}$ be an arbitrary superedge of $G^{\frac{1}{k+1}}$ with vertex set $\{ v, x_1^{\{v,w\}}, x_2^{\{v,w\}}, ... , x_k^{\{v,w\}}, w\}$. There exists an edge $u\in \{x_1^{\{v,w\}}x_2^{\{v,w\}}, ... , x_{k-1}^{\{v,w\}}, x_k^{\{v,w\}}\}$. Consider the graph in Figure \ref{paa}. We have the following cases:
 \begin{figure}
	\begin{center}
		\includegraphics[width=0.35\textwidth]{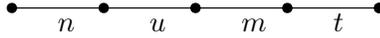}
		\caption{\label{paa} A part of a superedge in the proof of Theorem \ref{zzz}.}
	\end{center}
\end{figure}
\begin{enumerate}
\item[Case 1)] Suppose that the edge $u$ has color $i$ and the edge $n$ has color $j$ and the edge $m$ has color $t$. In this case, we make $G/u$ and do not change the color af any edge. So without adding a new color we have a dominated edge coloring for this new graph.
\item[Case 2)] Suppose that the edge $u$ has color $i$ and the edge $n$ has color $j$ and the edge $m$ has color $j$. In this case, we make $G/u$ and change one of the edge with color $j$ to color $i$ such that two edge with color $i$ not adjacent. So without adding a new color, we have a dominated edge coloring for this new graph.
\end{enumerate}
Now we do the same algorithm for all superedges. So we have a dominated edge 
coloring again.\qed
 \end{proof}
 \begin{theorem}\label{zdf}
 For any graph $G$, $\chi_{dom}^{\prime}(G^{\frac{1}{2}})\leq \chi_{dom}^{\prime}(G^{\frac{1}{3}})$.
 \end{theorem}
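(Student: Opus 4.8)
The plan is to reuse the edge-contraction idea behind Theorem \ref{zzz}, now in the one case ($k=2$) that its proof excludes, where each superedge of $G^{\frac{1}{3}}$ has length $3$ and the only internal edge is flanked by the two edges meeting the original vertices. Observe first that $G^{\frac{1}{2}}$ is exactly the graph obtained from $G^{\frac{1}{3}}$ by contracting, in every superedge $P^{\{v,w\}}=v\,x_1^{\{v,w\}}\,x_2^{\{v,w\}}\,w$, the middle edge $x_1^{\{v,w\}}x_2^{\{v,w\}}$: the contractions never interfere, and each turns a path of length $3$ into a path of length $2$ with the same endpoints, the merged internal vertex becoming the single internal vertex of the length-$2$ superedge. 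Accordingly I would start from a dominated edge coloring $c'$ of $G^{\frac{1}{3}}$ with $\chi_{dom}^{\prime}(G^{\frac{1}{3}})$ colors and construct a dominated edge coloring $c$ of $G^{\frac{1}{2}}$ that introduces no new color, so that the bound is immediate.

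Write $e_1,e_2,e_3$ for the three edges of a superedge (in order from $v$ to $w$) and $a=c'(e_1)$, $b=c'(e_2)$, $d=c'(e_3)$. The default rule is to give the two edges of the contracted superedge the colors $a$ and $d$; then every edge incident to an original vertex keeps its $c'$-color, so $c$ is automatically proper at the original vertices, and the only local failure of properness occurs when $a=d$. Call such a superedge \emph{bad}. The crucial structural point, which uses Remark \ref{aa}, is that if $a=d$ then $e_2$ is the unique edge adjacent to both $e_1$ and $e_3$, so the color class of $a$ in $c'$ must be exactly $\{e_1,e_3\}$ and is dominated only by $e_2$; in particular the color $a$ occurs nowhere else in $G^{\frac{1}{3}}$ and the color $b$ occurs at most once more.

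For a bad superedge I would keep one of its two surviving edges colored $a$ and recolor the other with $b$. Which of the two to recolor is chosen so that the new occurrence of $b$ is non-adjacent, in $G^{\frac{1}{2}}$, to the at most one other edge of $c'$ colored $b$ that survives the contraction — this is always possible, because that other edge, being a single edge of $G^{\frac{1}{3}}$ that still meets an original vertex, is incident to at most one of $v,w$ (and if it is an internal edge it disappears, so either choice is fine). Since there are at most two edges colored $b$, at most two bad superedges can be forced to compete for the color $b$ or for a common vertex, and the freedom above lets me make the choices consistently so that no two recolored edges meet. The resulting $c$ is then proper: at the new internal vertices the two edges have distinct colors by construction, and at the original vertices the argument above together with the careful choice handles all conflicts.

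What remains, and what I expect to be the main obstacle, is checking that every color class of $c$ is dominated. Singleton classes are trivially dominated, since every edge of $G^{\frac{1}{2}}$ has a neighbour (its partner in the same superedge). A size-$2$ class of $c$ obtained unchanged from a class of $c'$ that was dominated by an edge incident to an original vertex stays dominated by the image of that edge, because the adjacencies among edges incident to original vertices are preserved by the contraction and the only new adjacency (between the two edges of a superedge) is exactly what is needed. The delicate case is the bad superedges: recoloring $e_3$ (say) with $b$ could in principle break the domination of some class that $e_3$ used to dominate, and it enlarges the class of $b$; here one uses that in $G^{\frac{1}{3}}$ the neighborhood of $e_3$ consists of $e_2$ together with the edges at $w$, and that the companion edge of the same superedge — still colored $a$ — is adjacent both to the recolored edge and to the other surviving edge colored $b$, hence serves as the required dominating edge for the enlarged color class. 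Writing all of this out cleanly, and checking the side-choices when several bad superedges interact, is the bulk of the work; the rest of the argument is bookkeeping.
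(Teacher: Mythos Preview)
Your approach is essentially identical to the paper's: contract the middle edge of each superedge of $G^{\frac{1}{3}}$, keep the colors of the two outer edges by default, and in the ``bad'' case where the two outer edges share a color recolor one of them with the middle edge's color, choosing the side so as to avoid creating an adjacent repetition. The paper's write-up is much terser and does not spell out the domination check or the interaction between several bad superedges that you flag as ``the bulk of the work''; your more careful analysis of these points (using Remark~\ref{aa} to pin down the structure of the bad class) is a genuine improvement in rigor over the paper, but the underlying idea is the same.
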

 \begin{proof}
 First we give a dominated edge coloring to the edges of $G^{\frac{1}{3}}$. Let $P^{\{w,z\}}$ be an arbitrary superedge of $G^{\frac{1}{3}}$ with edge set $\{s,v,u\}$ (see Figure \ref{df1}) and suppose that edge $v$ has the color $i$.
 \begin{figure}
	\begin{center}
		\includegraphics[width=0.4\textwidth]{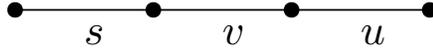}
		\caption{\label{df1} A part of a superedge in the proof of Theorem \ref{zdf}.}
	\end{center}
\end{figure}
We have the following cases:
\begin{enumerate}
\item[Case 1)]
The edge $u$ has the color $j$ and the edge $s$ has the color $t$. In this case, we make $G/v$ and don`t change the color of any edges. So we have a dominated edge coloring for this new graph.
\item[Case 2)] The edge $u$ and $v$ have the color $j$. In this case, we make $G/v$ and change one of the edge with color $j$ to color $i$ such that two edge with color $i$ not adjacent. So without adding a new color we have a dominated edge coloring for this new graph.
\end{enumerate}
Now we do the same algorithm for all superedge. Therefore we have a dominated edge coloring again.\qed
 \end{proof}
 
\end{document}